\documentclass[11pt,reqno]{amsart}
\usepackage{amsmath,amsthm,amsfonts,amssymb,mathrsfs,bm,graphicx,stmaryrd}

\usepackage{mathtools}
\usepackage{dsfont}
\usepackage{multicol}
\usepackage[colorlinks=true,linkcolor=blue]{hyperref}
\hypersetup{bookmarksdepth=1}
\usepackage{enumitem}
\usepackage{bbm}
\usepackage{mathrsfs}
\usepackage{versions}
\usepackage{subcaption}
\usepackage{tikz}
\usepackage[letterpaper,hmargin=1.0in,vmargin=1.0in]{geometry}
\parskip 	\smallskipamount

\renewcommand{\P}{\mathbb{P}}

\newcommand{\ce}{\mathcal{E}}

\newcommand{\Z}{\mathbb{Z}}
\newcommand{\N}{\mathbb{N}}
\newcommand{\R}{\mathbb{R}}
\newcommand*{\wih}{\widehat}

\newcommand{\vep}{\varepsilon}

\def\l{\left}
\def\r{\right}
\def\<{\langle}
\def\>{\rangle}
\DeclareMathOperator{\sech}{sech}

\newtheorem{theorem}{Theorem}[section]

\newtheorem{lemma}[theorem]{Lemma}
\newtheorem{proposition}[theorem]{Proposition}

\theoremstyle{remark}
\newtheorem{remark}{\bf Remark}

\numberwithin{equation}{section}

\usepackage{palatino}

\title[Right tail estimates for the edge eigenvalues of beta ensembles]{Optimal upper tail estimates for the edge eigenvalues of $\beta$-ensembles}
\author[Baslingker]{Jnaneshwar Baslingker}

\begin{document}
\begin{abstract}
 Hermite and Laguerre $\beta$-ensembles are important and well studied models in random matrix theory, with the special cases $\beta=1,2,4$ corresponding to classical matrix ensembles. Although sharp moderate-deviation estimates have been established for the largest eigenvalues \cite{BBBK25}, substantially less is known for other edge eigenvalues. Even for the limiting distributions $TW_\beta^{(k)}$, the leading constants in the tail asymptotics are not known for general $\beta$ and $k\ge2$. In this paper, for each fixed $k\ge2$, we prove matching upper and lower moderate deviation bounds for the right tail of the $k$-th largest eigenvalue, with the optimal exponential constant $2\beta k/3$, for general $\beta$. We also establish sharp lower tail estimates for the second largest eigenvalue of the Gaussian and Laguerre orthogonal ensembles, with exponential constant $1/24$, same as for the largest eigenvalue. For $TW_\beta^{(k)}$, we obtain optimal upper and lower tail asymptotics for general $\beta$ and $k\geq 1$. We also obtain new stochastic domination results for the edge eigenvalues. Our proofs combine variational arguments from tridiagonal matrix models, last passage percolation, stochastic comparisons of sums of edge eigenvalues, and superposition-decimation identities.
\end{abstract}
\address{Jnaneshwar Baslingker,  Department of Mathematics, University of Toronto, Canada}
\email{j.baslingker@utoronto.ca}
\maketitle

\section{Introduction and statements of main results} 
The study of random matrices with i.i.d. (real or complex) Gaussian entries goes back to almost
a century. The distribution of the eigenvalues of such matrices (GUE/GOE) and their
counterparts for the corresponding Wishart matrices (LUE/LOE) are important and have been extensively
studied. The universality of these models has been established across several landmark papers \cite{BEY14, S99,TV10}. These eigenvalue ensembles are special cases of a more general point process called the $\beta$-ensembles (for a real parameter $\beta>0$), which, starting from the work of Dyson in the 60s, have become an important topic of study over the last decades \cite{RRV, ES07, DE, KJ, BF97, KN04} (see also \cite{AGZ09} for an overview of $\beta$-ensembles). Hermite, Laguerre and Jacobi $\beta$-ensembles are the most well-studied; in this paper we shall only consider the Hermite and Laguerre $\beta$-ensembles. The special cases $\beta=1,2,4$ are exactly solvable and correspond to the eigenvalues of some matrix models (G(O/U/S)E respectively for the Hermite case and L(O/U/S)E respectively for the Laguerre case) and in general are more well studied (see \cite{Dy63, TW94}). 

In this paper, our main topic of interest is the $k$-th largest eigenvalue of Hermite and Laguerre $\beta$-ensembles. It was shown in a seminal paper of Ram\'irez, Rider and Vir\'ag \cite{RRV} (following heuristic approach outlined in \cite{ES07}) that the largest eigenvalue of both the Hermite and Laguerre $\beta$-ensembles (under appropriate normalisation) converge weakly to a distribution known as the Tracy-Widom $\beta$ distribution, which contains the classical (GUE) Tracy-Widom distribution as the $\beta=2$ special case. The tail behaviour of the Tracy-Widom distribution was also determined in \cite{RRV} where they showed that its distribution function $F_{\beta}$ satisfies $1-F_{\beta}(x)=\exp(-\frac{2\beta}{3}(1+o(1))x^{3/2})$ as $x\to \infty$ and $F_{\beta}(x)=\exp(-\frac{\beta}{24}(1+o(1))|x|^3)$ as $x\to -\infty$. 

It is important in many applications, that the pre-limiting (centred and scaled) largest eigenvalues also show similar tail behaviour. Although such estimates existed with the correct exponents $x^{3/2}$ for the upper tail and $x^{3}$ for the lower tail in \cite{LR10} (see also \cite{BGHK21} for the lower bound for lower tail in the Laguerre case) and for the cases $\beta=1,2$ (see e.g.\ \cite{PZ17,L07,BX23, BBF22} and the references therein), tail estimates with the optimal constants in the exponents had been missing until the work in \cite{BBBK25}. In addition, Lambert and Paquette~\cite{LP20},  \cite{LP23} had proved strong embedding results that imply sharp moderate deviation results for the largest eigenvalue, but apparently for a much smaller range of deviations (about $\log n$) than  covered in \cite{BBBK25} (powers of $n$). 

However, in sharp contrast with the largest eigenvalue, substantially less is known for the $k$-th largest eigenvalues where $k\geq 2$. Even the leading constants in the tail asymptotics of the limiting distributions, denoted by $F_{\beta}^{(k)}$, are not known in general. In the integrable case ($\beta=2$), determinantal methods give tail estimates for higher eigenvalues (see \cite[Lemma 2.1]{D24} and \cite{TW94}). For general $\beta$, no analogue with the sharp constant appears to be available. The lower tail is even less understood for fixed $k\ge2$.

At the finite $n$ level, the situation is similarly incomplete. For the largest eigenvalue, moderate-deviation estimates with the correct exponents was proved in \cite{BBBK25}. By contrast, for the $k$-th largest eigenvalue with $k\ge2$, sharp tail estimates (optimal exponential constant) with matching upper and lower bounds, have not previously been established. Although the correct exponents $x^{3/2}$ for the upper tail (upper bounds, $\beta=2$ case) and $x^{3}$ for the lower tail (upper bounds, $\beta>0$) have been established in \cite{DV21}, tail estimates with the optimal constants in the exponents had been missing. 

In this paper (Theorems \ref{t: right tail upper bound for beta geq 2}, \ref{t: rtlb}, \ref{t: ltub}) we establish the optimal moderate deviation estimates, for general Hermite and Laguerre $\beta$-ensembles, in the case of upper tails. While for lower tails, we establish optimal moderate deviation estimates for $\beta=1$ and $k=2$. Surprisingly the lower tail estimates, even though proven for $\beta=1$ and $k=2$, rigorously establishes that the lower tails behaviour does not change with $k$, in contrast with the upper tails.

Tracy-Widom distributions are important even beyond the random matrix theory. In the theory of the Kardar-Parisi-Zhang (KPZ) universality class, the classical Tracy-Widom laws play a role analogous to that of the Gaussian distribution in the theory of sums of independent random variables, arising as universal scaling limits for fluctuations in $(1+1)$-dimensional KPZ models. More generally, the collection of higher edge eigenvalues is related to the multi-line structure underlying KPZ models. The $k$-th largest eigenvalue corresponds, in various exactly solvable models, to higher lines of associated line ensembles or to multi-path statistics. There are also pre-limiting correspondences with distributional identities between multi-path statistics in certain KPZ models and the sum of edge eigenvalues of classical random matrix ensembles. It is therefore natural to expect that sharp moderate-deviation estimates for the $k$-th largest eigenvalue will have applications to the study of higher-line fluctuations and multi-path statistics in the KPZ universality class. 

We now move towards formulating precise statements of our results.

\subsection{Moderate deviations in Gaussian and Laguerre $\beta$-ensembles}

Throughout this subsection $n$ will denote a positive integer. For any real $\beta>0$, consider the probability density function of $\lambda_1\geq \lambda_2\geq\dots\geq\lambda_n\in \mathbb{R}$ given by 
\begin{align}\label{Hermite density}
\mathbb{P}_{n}^{\beta}\l(\lambda_1,\lambda_2,\dots,\lambda_n\r)=\frac{1}{Z_n^{\beta}}\prod_{k=1}^{n}e^{-\frac{\beta}{4}\lambda_k^2}\times\prod\limits_{j<k}\left|\lambda_j-\lambda_k\right| ^{\beta}. 
\end{align}

For $\beta=1,2$ or $4$, this distribution corresponds to the joint density of eigenvalues of $n\times n$ Gaussian orthogonal, unitary or symplectic matrices, G(O/U/S)E of (for general $\beta$ this density is also known as Hermite $\beta$-ensemble) random matrix theory.

For $m\in\mathbb{R}$ with $m>n-1$, consider the following joint density of $\lambda_1\geq\lambda_2\geq \dots\geq\lambda_n\geq 0 $,
\begin{align}\label{Laguerre density}
\mathbb{P}_{n,m}^{\beta}\l(\lambda_1,\lambda_2,\dots,\lambda_n\r)=\frac{1}{Z_{n,m}^{\beta}}\prod_{k=1}^{n}\lambda_k^{\frac{\beta}{2}\l(m-n+1\r)-1}e^{-\frac{\beta}{2}\lambda_k}\times \prod\limits_{j<k}\left|\lambda_j-\lambda_k\right| ^{\beta}.
\end{align}

When $m$ is an integer and $\beta=1,2$ or $4$, this distribution corresponds to the joint density of eigenvalues of  Laguerre orthogonal, unitary or symplectic matrices, L(O/U/S)E (for general $\beta$ the density is known as Laguerre $\beta$-ensemble) of random matrix theory. 

Despite being the focus of research for many years, there were no matrix models for general Hermite $\beta$ and Laguerre $\beta$-ensembles, until the work of 
Dumitriu and Edelman \cite{DE} who gave tridiagonal matrix models for \eqref{Hermite density} and \eqref{Laguerre density}. Define 
\begin{align}\label{eq: tridiag matrix}
 H_{n,\beta}:=\frac{1}{\sqrt{\beta}}\begin{bmatrix}
X_1 & Y_1 & 0 &\cdots 0\\
Y_1 & X_2 & Y_2 & \cdots 0\\
0 & Y_2 & X_3 &\cdots 0\\
\vdots & \vdots & \vdots &\vdots
\end{bmatrix}_{n\times n}
    \hspace{1cm}
B_{n,m,\beta}:=\frac{1}{\sqrt{\beta}}\begin{bmatrix}
D_1 & 0 & 0 &\cdots 0\\
C_1 & D_2 & 0 & \cdots 0\\
0 & C_2 & D_3 &\cdots 0\\
\vdots & \vdots & \vdots &\vdots
\end{bmatrix}_{n\times n}.
\end{align}
$H_{n,\beta}$ 
is the symmetric tridiagonal matrix with $X_i\sim N\l(0,2\r)$ and $Y_i\sim \chi_{\beta\l(n-i\r)}$ (here $Y\sim \chi_{\nu}$ means $Y>0$ and $Y^2\sim \chi_{\nu}^2$) and the entries are independent up to symmetry.
 $B_{n,m, \beta}$ is the bidiagonal matrix  with $D_i\sim \chi_{\beta\l(m-i+1\r)}$ and $C_i\sim \chi_{\beta\l(n-i\r)}$, where $m\in \mathbb{R}$ and $m+1>n$ and all the entries are independent.
Dumitriu and Edelman showed that the $n$ eigenvalues of $H_{n,\beta}$ have the joint law given by \eqref{Hermite density}
 and the eigenvalues of $L_{n,m,\beta}=B^t_{n,m, \beta}B_{n,m, \beta}$  
 have the joint density given by \eqref{Laguerre density}.
 
 We denote $k$-th largest eigenvalue of $H_{n,\beta}$ and $L_{n,m,\beta}$ by $\lambda_k$. We also use the notation \newline $\lambda_k^{\l(n,H,\beta\r)},\lambda_k^{\l(n,m,L,\beta\r)}$ for 
 the $k$-th largest point in \eqref{Hermite density} and \eqref{Laguerre density} respectively, then
 \begin{align}\label{eq: Dimtriu Edelman result}
     \lambda_{k}\l(H_{n,\beta}\r)\overset{d}{=}\lambda_k^{\l(n,H,\beta\r)}
     \quad   \text{and}\quad\lambda_{k}\l(L_{n,m, \beta}\r)\overset{d}{=}\lambda_k^{\l(n,m,L,\beta\r)}.
 \end{align}

Ram\'irez-Rider-Vir\'ag \cite{RRV} showed that the largest eigenvalues of the above random tridiagonal matrices, after suitable shifting and scaling, converge to the point process of eigenvalues of a second order stochastic differential operator called the stochastic Airy operator ($\mbox{SAO}_{\beta}$). In particular, using \eqref{eq: Dimtriu Edelman result}, their results show that
\begin{align}\label{eq: Hermite TW convergence}
\l(\frac{\lambda_k^{\l(n,H,\beta\r)}}{\sqrt{ n}}-2\r)n^{2/3}&\implies \mbox{TW}_{\beta}^{(k)},
    \\ \label{eq: Laguerre TW convergence}
    {\l(\sqrt{m n}\r)^{1/3}}{\l(\sqrt{m}+\sqrt{n}\r)^{2/3}}&\l(\frac{\lambda_k^{\l(n, m, L,\beta\r)}}{\l(\sqrt{m}+\sqrt{n}\r)^{2}}-1\r)\implies \mbox{TW}_{\beta}^{(k)},
\end{align}
where $\mbox{TW}_\beta^{(k)}$ is distributed as the negative of the $k$-th smallest eigenvalue of $\mbox{SAO}_\beta$. For $k=1$, we denote $TW_{\beta}^{(k)}$ as $TW_{\beta}$. \cite{RRV} also found various properties of  $\mbox{TW}_{\beta}$ (read as Tracy-Widom~$\beta$). Of particular interest to us is the tail behaviour which they found. As $a\rightarrow\infty$,
\begin{align}\label{eq: TW right tail bounds}
    \P\l(TW_{\beta}>a\r)&=\exp\l(-\frac{2}{3}\beta a^{3/2}\l(1+o(1)\r)\r),\\\label{eq: TW left tail bounds}
        \P\l(TW_{\beta}<-a\r)&=\exp\l(-\frac{1}{24}\beta a^{3}\l(1+o(1)\r)\r).
\end{align}
Prior to their work, Baik-Buckingham-DiFranco~\cite{BBD08} had already found much more  precise asymptotics for $\beta=1,2,4$. In \cite{DV13}, Dumaz and Vir\'ag sharpened the  right tail estimates for general $\beta$ by finding lower order corrections. Although we require new insights in using the tridiagonal matrix structure of $\beta$-ensembles to prove both upper and lower bounds for the upper tails, our methods are similar to those in \cite{BBBK25}. 

The $TW_{\beta}^{(k)}$ distributions (and their pre-limiting variants) also appear in many contexts outside random matrices. For instance, these laws describe the fluctuations of multi-path last passage times in percolation \cite{KJ}, longest $k$-disjoint increasing subsequence sums of a random permutation \cite{BDJ98}. It is therefore natural, and important in many applications, to investigate the tail bounds for $ \lambda_k^{\l(n,H,\beta\r)}$ and $ \lambda_k^{\l(n,m,L,\beta\r)}$. 

Even the analogues of \eqref{eq: TW right tail bounds}, \eqref{eq: TW left tail bounds} are not known for $k\geq 2$ and general $\beta$, let alone moderate deviation results for the prelimiting quantities in \eqref{eq: Hermite TW convergence}, \eqref{eq: Laguerre TW convergence}. Other than obtaining the moderate deviation results for the upper tails (see Theorem \ref{t: right tail upper bound for beta geq 2}-\ref{t: rtlb}), we also obtain analogues of \eqref{eq: TW right tail bounds} (for $\beta\geq 2/k$), \eqref{eq: TW left tail bounds} (for $\beta>0 $ and $k\geq 2$). See Theorem \ref{thm: TW tails}. 

We summarize here some of the known results about the finite size tail estimates for statistics of interest that converge to the Tracy-Widom distribution. It was considered in \cite{L04,L07} that the right tail inequality for GUE (also LUE) may be shown to follow from results of Johansson \cite{KJ} which uses sub-additivity arguments and large deviation asymptotics. The optimal right tail bounds in the case of GUE, up to the constants in the exponential tails, were obtained in \cite{PZ17}. For GOE, \cite{BX23} gives the right tail upper bound with optimal constants in a certain parameter regime. The right tail upper bounds, with the optimal constant for last passage model that corresponds to LOE was obtained in \cite{BBF22}. By the superposition-decimation procedure \cite{FR01}, similar bounds for GOE can be also be found (see \cite{Ledoux09}). Precise upper tail estimates are also available for some related positive temperature models in the KPZ universality class, see e.g. \cite{DG23, GH22}.

Even for the exactly solvable cases, the left tail bounds are generally less accessible, often requiring more difficult Riemann-Hilbert analysis. The left tail inequalities for last passage times in planar last passage percolation with geometric weights, which corresponds to largest element in the Meixner ensemble, are proven in \cite{BDMMZ01} using Riemann-Hilbert methods. In case of passage times in Poissonian LPP, which is obtained by a suitable limit of largest element in Meixner ensemble, optimal estimate for upper tail and lower tail was shown in \cite{LM01,LMS02} using Riemann-Hilbert methods. Coming to the case of Hermite and Laguerre ensembles with general $\beta$, Ledoux and Rider \cite{LR10} provide the tail bounds with the correct exponents $3/2$ in the upper tail and $3$ in the lower tail for general $\beta$ (but not the optimal constant $2\beta/3$ and $\beta/24$). Upper bounds for the right and left tails for Hermite and Laguerre ensembles were obtained in \cite[Theorem 1,2]{LR10}. But it was remarked that the lower bounds were obtained only in the Hermite ensemble. The left tail lower bound for Laguerre ensemble (general $\beta$) was obtained in \cite[Theorem 2]{BGHK21}. In \cite{GH23} the 3/2 and 3 exponents (without the optimal constants) for the right tail and left tails was conditionally obtained in last passage percolation models via bootstrapping and geodesic geometry. 

The problem of obtaining sharp moderate deviation results, optimal up to the constants in the exponents, for both Hermite and Laguerre ensembles, for general $\beta$, was finally resolved in \cite{BBBK25}. Similar results for height functions of stochastic six vertex models were obtained in \cite{GS25}. For $k\geq 2$, very little is known about the tail estimates of $k$-th largest eigenvalue fluctuations of Hermite and Laguerre $\beta$-ensembles. For $\beta=2$, tail estimates of $TW_{\beta}^{(k)}$ are known using the determinantal structure \cite{D24, TW94}. The lower tail estimates are also known, only for $\beta=2$ (\cite{TW94}). For the prelimiting objects, the correct exponents have been found (for special values of $\beta$) in \cite{DV21}.

We state the main theorems now. Theorems~\ref{t: right tail upper bound for beta geq 2}-\ref{t: ltub} essentially assert that the pre-limiting random variables in \eqref{eq: Hermite TW convergence} and \eqref{eq: Laguerre TW convergence} have roughly the same upper tail probabilities as product of $k$ independent Tracy-Widom distributions. On the lower tail, for $\beta=1$, both the first and the second largest eigenvalue have the same tails.

It is widely believed that the left tails should not depend on $k$, for all $\beta>0$. The heuristic is that to push the $k$-th largest eigenvalue too much to the left, all the lower eigenvalues need to be pushed, which makes the cost comparable to that of pushing the largest eigenvalue too much to the left. Hence the left tails are expected not to depend on $k$. Indeed for $\beta=2$, this is known in the tails of $TW_{2}^{(k)}$ \cite{TW94}. We prove that this heuristic is correct for $\beta=1$ and $k=2$ in the pre-limiting models. To the best of our knowledge, this is the first result establishing the heuristic. Also, to the best of our knowledge, our results provide the first optimal moderate-deviation estimates for the upper tail, for $k\ge2$, in any model in the KPZ universality class.

\begin{theorem}
\label{t: right tail upper bound for beta geq 2}
For the right tail upper bound we have
    \begin{enumerate}[label=(\roman*), font=\normalfont] 
        \item For any $\beta\geq 2$ or $\beta=1$. Let $\varepsilon>0$ and $k\in\N$. We can choose $n_{\varepsilon,\beta,k} \in \N$ and $\gamma_{\varepsilon,\beta,k},t_{\vep,\beta,k}>0$ such that for all $n\geq n_{\varepsilon,\beta,k}$ and for all $t$ with $t_{\vep,\beta,k} \leq t\leq \gamma_{\vep,\beta,k}n^{2/3}$,  the following holds 
\begin{align*}
    \P \l( \lambda_k^{\l(n,H,\beta\r)} \geq 2 \sqrt{n}+tn^{-1/6}\r) \leq   \exp\l({-\frac{2\beta k}{3}t^{3/2}\l(1-\varepsilon\r)}\r).
\end{align*}

\item Fix any $\beta \geq 2$ or $\beta=1$. Let $ \varepsilon>0,\ c\geq 1$ and $k\in\N$ and $m=cn$. Then there exist $n_{\varepsilon,\beta,c,k} \in \N$ and $\gamma_{\varepsilon,\beta,c,k},t_{\vep,\beta,c,k}>0$ such that the following holds. For any real number $t$ satisfying $t_{\vep,\beta,c,k}\leq t \leq \gamma_{\varepsilon,\beta,c,k}n^{2/3}$ and $n\geq n_{\varepsilon,\beta,c,k}$, we have
        \[
        \P \l(\lambda_{k}^{(n,m,L, \beta)} \geq \l(\sqrt{m}+\sqrt{n} \r)^2+t \l(\sqrt{m n} \r)^{-1/3}\l(\sqrt{m}+\sqrt{n} \r)^{4/3} \r) \leq \exp\l({-\frac{2\beta k}{3}t^{3/2}\l(1-\varepsilon\r)}\r).
        \]
    \end{enumerate}
\end{theorem}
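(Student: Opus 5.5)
The plan is to reduce the statement about $\lambda_k$ to an upper-tail estimate for the sum of the top $k$ eigenvalues, and to get the factor $k$ in the exponent from an (approximate) independence or disjointness structure. First, on the event $\{\lambda_k \geq 2\sqrt n + t n^{-1/6}\}$ every one of $\lambda_1,\dots,\lambda_k$ is at least that large. So, writing $\tilde\lambda_j=(\lambda_j/\sqrt n-2)n^{2/3}$ for the rescaled eigenvalues, it suffices to show
\[
\P\Bigl(\textstyle\sum_{j\le k}\tilde\lambda_j \ge kt,\ \tilde\lambda_k\ge t\Bigr)\le \exp\Bigl(-\tfrac{2\beta k}{3}t^{3/2}(1-\varepsilon)\Bigr).
\]
The Laguerre case is treated the same way with the scaling of \eqref{eq: Laguerre TW convergence}.

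The core estimate I aim for is a BK-type product bound for the top $k$ eigenvalues, of the form
\[
\P\bigl(\tilde\lambda_1\ge a_1,\dots,\tilde\lambda_k\ge a_k\bigr)\le \prod_{j=1}^k \P\bigl(\tilde\lambda_1^{(n_j)}\ge a_j(1-\delta)\bigr),
\]
where the $\tilde\lambda_1^{(n_j)}$ are largest eigenvalues of ensembles of size comparable to $n$. Each factor is then controlled by the optimal single-eigenvalue moderate deviation bound of \cite{BBBK25}, namely $\exp(-\tfrac{2\beta}{3}a_j^{3/2}(1-\delta))$ uniformly for $a_j\le \gamma n^{2/3}$. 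On our event we may take all $a_j\ge t$, which already gives $\sum_j a_j^{3/2}\ge k t^{3/2}$. If instead one only controls the sum, a union bound over a polynomial-size grid of splittings $(a_1,\dots,a_k)$ with $\sum a_j=kt$ combined with convexity of $x\mapsto x^{3/2}$ gives the same conclusion, since the grid only costs a $\mathrm{poly}(t)$ factor that is absorbed into $\varepsilon$ for $t\ge t_{\varepsilon,\beta,k}$.

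For $\beta=2$ (Laguerre) I would obtain the product bound from last passage percolation. By Johansson's identity \cite{KJ}, $\lambda_1+\dots+\lambda_k$ of LUE equals in law the maximal weight of $k$ disjoint up-right paths in an exponential LPP. The event that there exist $k$ disjoint paths with weights $\ge x_j$ is a disjoint occurrence of increasing events, so the van den Berg--Kesten inequality bounds its probability by $\prod_j \P(L\ge x_j)$, and $L\overset d=\lambda_1$. GUE then follows by the standard Laguerre-to-Hermite limit or by the same argument on a suitable LPP geometry. For $\beta=1$ I would transfer the bound via the superposition-decimation identity of Forrester--Rains \cite{FR01}: alternate points of the superposition of two independent orthogonal ensembles form a unitary ensemble, which lets the $k$-th eigenvalue of the $\beta=1$ ensemble be compared with top eigenvalues of $\beta=2$ and independent $\beta=1$ ensembles. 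For $\beta=1$ there is also the point-to-line LPP representation of LOE \cite{BBF22} that can be used directly.

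The main obstacle is general $\beta\ge 2$, where no LPP or determinantal structure is available. My first attempt would use the tridiagonal model together with the Ky Fan principle, $\sum_{j\le k}\lambda_j=\max\sum_{j\le k}\langle v_j,Hv_j\rangle$ over orthonormal $v_1,\dots,v_k$, with the variational upper bound of \cite{BBBK25} for each quadratic form. To decouple the $k$ vectors I would localize the test vectors to disjoint index windows of the tridiagonal matrix, paying a boundary cost that is negligible at scale $n^{1/3}$. The hope is a stochastic domination of the top-$k$ sum by a sum of $k$ roughly independent top eigenvalues. I expect this decoupling, and showing that $\beta\ge2$ is exactly what makes the resulting comparison monotone, to be the delicate step.
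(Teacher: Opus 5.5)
\textbf{Gap: general $\beta\ge 2$.} You flag this case yourself, and the route you sketch cannot close it. Ky Fan's principle writes $\sum_{j\le k}\lambda_j$ as a \emph{maximum} over orthonormal $k$-frames. So exhibiting frames supported on disjoint windows only bounds the sum from below; that is precisely how the paper proves Theorem~\ref{t: rtlb}. For an upper bound you must control every frame. Nothing in your sketch shows the maximizing frame can be localized into disjoint windows at negligible cost, and that would be the whole difficulty. Likewise, test-vector arguments give right-tail lower bounds only. The right-tail upper bound of \cite{BBBK25} for $\beta>2$ comes from $\beta=2$ LPP plus the comparison of Theorem~\ref{Stoch dom}, which does not extend to $\lambda_k$.

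The missing idea is a monotone coupling across $\beta$ for the top-$k$ \emph{sum}. For a Jacobi matrix, $\sum_{i\le k}\lambda_i$ is convex in each off-diagonal entry $a_i$ (a maximum of linear functions). It is also even in $a_i$ (conjugate by $\mathrm{diag}(1,\dots,1,-1,\dots,-1)$), hence non-decreasing on $[0,\infty)$; this is Lemma~\ref{lem: Jacobi non-neg function}. Now $\sqrt{\beta}H_{n,\beta}$ has $N(0,2)$ diagonal and $\chi_{\beta(n-i)}$ off-diagonal entries. With $n'=\lceil\beta n/2\rceil$, $\sqrt{2}H_{n',2}$ has $N(0,2)$ diagonal and $\chi_{2(n'-i)}$ off-diagonal entries, and $2(n'-i)\ge\beta(n-i)$ for all $i$ because $\beta\ge2$. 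So the two can be coupled with equal diagonals and larger off-diagonals on the top-left $n\times n$ block of the second, and by interlacing the extra rows only increase the top-$k$ sum. This gives $\sqrt{\beta}\sum_{i\le k}\lambda_i(H_{n,\beta})\overset{\text{s.t.}}{\leq}\sqrt{2}\sum_{i\le k}\lambda_i(H_{n',2})$ (Proposition~\ref{prop: sum of top k stoch dom}). The substitution $t\mapsto(\beta/2)^{2/3}t$ then turns the $\beta=2$ bound $\exp(-\frac{4k}{3}t^{3/2}(1-\varepsilon))$ into $\exp(-\frac{2\beta k}{3}t^{3/2}(1-\varepsilon))$. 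This is exactly where $\beta\ge2$ enters, since the comparison only runs down to the base case $\beta=2$. It is also why the base case must be proved for the sum of the top $k$ eigenvalues rather than for $\lambda_k$. The Laguerre case is the same, because the diagonal entries of $B^tB$ also increase with the $\chi$ parameters.

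\textbf{The other cases.} Your $\beta=1$ step is the paper's: independence, interlacing and $\mathrm{even}(\mathrm{GOE}_n\cup\mathrm{GOE}_{n+1})=\mathrm{GUE}_n$ give $\P(\lambda_k(H_{n,1})\ge x)^2\le\P(\lambda_k(H_{n,2})\ge x)$.

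For $\beta=2$ you use BK; the paper instead uses super-additivity of the $k$-path passage time. That bounds the $m$-th power of the probability by the probability of the analogous event at size $mn$. The paper then applies the LDP for the top $k$ eigenvalues and the contraction principle, where convexity of $J$ gives $J_{\mathrm{sum}}(s)=kJ(s/k)$. This is your Jensen step, but at the level of rate functions and with no union bound.

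Your route can work but needs two repairs. First, BK controls ``$k$ disjoint paths with weights $\ge x_j$'', not $\{\tilde\lambda_j\ge a_j\ \forall j\}$, because $\lambda_j=T_j-T_{j-1}$ is not a path weight; so you must go through the sum. Second, the splittings range over $|a_j|\lesssim n^{2/3}$, so a uniform grid of mesh $\delta t$ costs $\mathrm{poly}(n/t)$, not $\mathrm{poly}(t)$. That cannot be absorbed when $t$ is of order one. Unbalanced splittings need a separate (e.g.\ dyadic) treatment exploiting the extra gain from convexity, plus a large-deviation bound for $a_j\gtrsim n^{2/3}$.

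For GUE, use Brownian LPP directly, as the paper does. The $m/n\to\infty$ limit from LUE would require uniformity in $c$ that you do not have.
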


\begin{remark}
       
   The results of \cite{BBBK25}[Theorem 1.1(iii)] for the largest eigenvalue were less than satisfactory. For Laguerre ensembles, we were unable to obtain the right tail upper bound for general $(m,n)$ in the range $1 \leq \beta <2.$ See \cite{BBBK25}[Remark 1]. The technique used in the proof of Theorem \ref{t: right tail upper bound for beta geq 2} covers this gap, especially for $\beta=1$ which corresponds to classical matrix ensembles. We state the following result, whose proof technique is discussed in Remark \ref{rem: stronger upper tail}.
    \end{remark}
   \begin{theorem}\label{thm: stronger upper tail}
     Fix any $\beta \geq 1$ and $ \varepsilon>0,c\geq 1$ and $m=cn$. Then there exist $n_{\varepsilon,\beta,c} \in \N$ and $\gamma(\varepsilon,c),t_{\vep,\beta,c}>0$ such that the following holds. For any real number $t$ satisfying $t_{\vep,\beta,c}\leq t \leq \gamma(\varepsilon,c)n^{2/3}$ and $n\geq n_{\vep,\beta,c}$, we have
        \[
        \P \l(\lambda_{1}^{(n,m,L, \beta)} \geq \l(\sqrt{m}+\sqrt{n} \r)^2+t \l(\sqrt{m n} \r)^{-1/3}\l(\sqrt{m}+\sqrt{n} \r)^{4/3} \r) \leq \exp\l({-\frac{2\beta}{3}t^{3/2}\l(1-\varepsilon\r)}\r).
        \]
   \end{theorem}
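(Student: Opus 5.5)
The plan is to rerun the variational argument on the bidiagonal model $B_{n,m,\beta}$ that is used for Theorem~\ref{t: right tail upper bound for beta geq 2}(ii), specialised to $k=1$. The one new ingredient is a sharper exponential-moment estimate for the chi-square fluctuations, and it is this estimate that should extend the admissible range of $\beta$ from $\beta\ge2$ down to $\beta\ge1$.

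\textbf{Step 1 (variational reformulation).} I write $\lambda_1(L_{n,m,\beta})=\sup_{\|v\|=1}\|B_{n,m,\beta}v\|^2$. The rows of $B$ give
\[
\|Bv\|^2=\frac1\beta\sum_i\bigl(D_iv_i+C_{i-1}v_{i-1}\bigr)^2.
\]
I then expand around the deterministic profile $\mathbb{E}D_i^2=\beta(m-i+1)$, $\mathbb{E}C_i^2=\beta(n-i)$. The edge event $\lambda_1\ge(\sqrt m+\sqrt n)^2+t(\sqrt{mn})^{-1/3}(\sqrt m+\sqrt n)^{4/3}$ only involves vectors living in the top window of length $L\asymp n^{1/3}t^{1/2}\cdot(\text{const depending on }c)$. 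After rescaling, $\|Bv\|^2$ minus the edge constant becomes a discrete analogue of
\[
-\int (f')^2-\int x f^2+\int f^2\,dW_\beta,
\]
where the noise $dW_\beta$ is built from the centred variables $D_i^2-\mathbb{E}D_i^2$, $C_i^2-\mathbb{E}C_i^2$ and $D_iC_i-\mathbb{E}D_iC_i$. I localise to the window by a truncation lemma: vectors with substantial mass beyond $\delta^{-1}L$ are costly in probability $\exp(-Ct^{3/2}/\delta)$, as in \cite{BBBK25}.

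\textbf{Step 2 (exponential Chebyshev for a fixed test vector).} For a fixed, suitably smooth profile $f$, I bound $\P(\langle f,\text{noise}\rangle\ge\text{deterministic deficit})$ by $\exp(-\sup_\theta[\theta\cdot\text{deficit}-\Lambda(\theta)])$. Here $\Lambda$ is the exact log-MGF, assembled from
\[
\log\mathbb{E}e^{s\chi^2_{\nu}}=-\tfrac{\nu}{2}\log(1-2s)
\]
for the squared entries, and from a Gaussian-comparison bound for the cross terms $D_iC_i$. Since the degrees $\beta(m-i+1)$ and $\beta(n-i)$ are of order $n$, the quadratic approximation $\Lambda(\theta)\approx\frac{2}{\beta}\theta^2\sum f_i^4\cdot(\text{scale})$ holds uniformly. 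The optimizer $\theta^*$ is of order $t^{1/2}$ in rescaled units, which remains in the range where the quadratic approximation is valid as long as $t\le\gamma n^{2/3}$. Optimising over $f$ then reproduces the Airy variational problem whose value is $\frac{2\beta}{3}t^{3/2}$.

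\textbf{Step 3 (uniformity over $v$, the main obstacle).} The supremum over $v$ has to be replaced by a maximum over a finite net of profiles of size $\exp(o(t^{3/2}))$. This requires controlling the roughness of the maximising eigenvector. In \cite{BBBK25} this control came from an energy bound on $\sum(v_i-v_{i-1})^2$, obtained by absorbing the noise term through a quadratic-form inequality. That inequality needed the noise variance factor $2/\beta$ to be at most $1$, which is where $\beta\ge2$ entered.

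For $1\le\beta<2$ I would instead split the noise at scale $\delta$. The part of $f^2\,dW_\beta$ coming from oscillations on scales below $\delta L$ is handled block by block: I bound each block's local maximal eigenvalue through an independent small-matrix Laguerre estimate, namely the known crude bound of \cite{LR10} with the correct $3/2$ exponent but a non-optimal constant. The cost of these blocks is $\exp(-C\delta^{-1/2}t^{3/2})$, which is negligible for small $\delta$. The coarse-scale part is then only a $\delta$-dependent finite-dimensional family, for which Step 2 applies, and a union bound over it costs $e^{O(\delta^{-1}\log t)}$.

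This decoupling is exactly where $\beta\ge1$ should be used. It needs the sub-window chi-square sums to have log-MGF finite at the required $\theta$, namely $2\theta/\beta<1$ after rescaling, and this holds precisely when $\beta\ge1$. Finally, I let $\delta\to0$ and then $\varepsilon\to0$, with $t_{\varepsilon,\beta,c}$ and $\gamma(\varepsilon,c)$ chosen so that all the error terms are absorbed into $\varepsilon t^{3/2}$.
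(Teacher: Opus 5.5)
There is a genuine gap, and it is your Step 3. That step carries the whole difficulty of a right-tail \emph{upper} bound.

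Step 2 only controls $\langle L_{n,m,\beta}v,v\rangle$ for one fixed $v$. This is why test vectors only ever give right-tail lower bounds (as in Theorem~\ref{t: rtlb}) or left-tail upper bounds. To pass to $\lambda_1=\sup_v$ with only an $e^{o(t^{3/2})}$ loss you must coarse-grain, because a plain net on the edge window has log-cardinality of order (window length)$\times\log n$. That is not $o(t^{3/2})$ over most of the range $t\le\gamma n^{2/3}$. Coarse-graining in turn needs a proof that a rough maximiser cannot exploit the fine-scale noise, except on an event of probability $e^{-Ct^{3/2}}$ with $C$ much larger than $2\beta/3$.

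Your block device does not give this, for two reasons. First, $\lambda_1$ does not decompose over blocks, and block-local top eigenvalues say nothing about how fine-scale noise correlates with the oscillations of a global vector spread over many blocks. Second, \cite{LR10} concerns the top eigenvalue of a whole Laguerre ensemble. An interior block of $B_{n,m,\beta}$ has $\chi$-degrees of order $n$, not of order the block length, so it is not such an ensemble, and the cost $\exp(-C\delta^{-1/2}t^{3/2})$ is not derived from anything. A direct route would need something like a summation-by-parts representation that makes the variational functional continuous in the partial-sum process of the centred noise, together with a sharp moderate-deviation principle for that process, uniform in $t\le\gamma n^{2/3}$. Nothing in your sketch plays that role.

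Your account of where $\beta\ge1$ enters also fails. At the optimiser the per-entry tilt is $2\theta^*v_i^2/\beta\asymp tn^{-2/3}\le\gamma$, independent of $\beta$, so no threshold at $\beta=1$ appears; a variational proof that worked would cover all $\beta>0$, which the paper says it cannot do. The premise is off as well: neither Theorem~\ref{t: right tail upper bound for beta geq 2}(ii) nor its $k=1$ version in \cite{BBBK25} is proved variationally. The old restriction $\beta\ge2$ did not come from a quadratic-form inequality with a factor $2/\beta$.

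The missing idea is that $\beta=1$ is the threshold because an exact base case exists there, not because of any analytic condition. The paper never controls $\sup_v$ and proceeds in three steps.
\begin{enumerate}
\item At $\beta=2$, $\lambda_1(L_{n,m,2})$ is an exponential last passage time. Superadditivity plus the LUE large deviation principle of \cite{GH22A} give the bound $\exp(-\frac43t^{3/2}(1-\varepsilon))$.
\item By \eqref{eq:LOE-LSE relation}, $\lambda_1(L_{n,m,2})$ has the law of the second-largest point of the union of independent $L_{n,m,1}$ and $L_{n+1,m+1,1}$. That point exceeds $x$ whenever both top eigenvalues do. Since $\lambda_1(L_{n+1,m+1,1})$ stochastically dominates $\lambda_1(L_{n,m,1})$ by interlacing and monotonicity, $\P(\lambda_1(L_{n,m,1})\ge x)^2\le\P(\lambda_1(L_{n,m,2})\ge x)$. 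This halves the constant to the sharp $\frac23$ at $\beta=1$.
\item Theorem~\ref{Stoch dom} with $\beta_1=1$ transfers this to every $\beta\ge1$. The factor $\beta^{2/3}$ in the comparison turns $\frac23t^{3/2}$ into $\frac{2\beta}{3}t^{3/2}$.
\end{enumerate}
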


\begin{theorem}
\label{t: rtlb}
For the right tail lower bound we have
    \begin{enumerate}[label=(\roman*), font=\normalfont]
        \item For any $\beta>0$ and any small $\varepsilon>0$ and any $k\in\N$, there exist $n_{\vep,\beta,k} \in \N$ and $\gamma_{\vep,\beta,k},t_{\vep,\beta,k}>0$ such that for all $n \geq n_{\vep,\beta,k}$ and $ t_{\vep,\beta,k} \leq t \leq \gamma_{\vep,\beta,k}n^{2/3}$ we have that 
        \[
        \P \l(\lambda_k^{(n,H,\beta)} \geq 2\sqrt{n}+tn^{-1/6} \r) \geq \exp\l({-\frac{2\beta k}{3}t^{3/2}\l(1+\varepsilon\r)}\r).
        \]
    
        \item Fix any $\beta >0$ and $k\in\N$ and $ \varepsilon,M>0$. Then there exist $n_{\varepsilon,\beta,M,k} \in \N$ and $\gamma_{\vep,\beta,M,k}, t_{\vep,\beta,M,k}>0$ such that the following holds. Consider any pair of natural numbers $\l(m,n\r)$ with $ m+1>n\geq n_{\varepsilon,\beta,M,k}$ and $ \frac{m}{n} \leq M$. Further, consider any real number $t$ satisfying $t_{\vep,\beta,M,k}\leq t \leq \gamma_{\varepsilon,\beta,M,k}n^{2/3}$. Then we have
        \[
        \P \l(\lambda_k^{(n,m,L,\beta)} \geq \l(\sqrt{m}+\sqrt{n} \r)^2+t\l(\sqrt{m n}\r)^{-1/3}\l(\sqrt{m}+\sqrt{n}\r)^{4/3} \r) \geq \exp \l({-\frac{2\beta k}{3}t^{3/2}\l(1+\varepsilon\r)}\r).
        \]
    \end{enumerate}
\end{theorem}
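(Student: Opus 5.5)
We prove the lower bound by forcing $k$ eigenvalues above the threshold at once, using the tridiagonal models $H_{n,\beta}$ and $L_{n,m,\beta}$ together with the Courant–Fischer principle. We have $\lambda_k\ge a$ as soon as there are $k$ mutually orthogonal unit vectors $v_1,\dots,v_k$ with $\langle v_i, H v_j\rangle=0$ for $i\neq j$ and $\langle v_i,Hv_i\rangle\ge a$. In that case the compression of $H$ to their span is diagonal with entries $\ge a$, so by min-max $\lambda_k\ge a$. The natural choice is vectors with \emph{disjoint supports} that are not adjacent, i.e.\ separated by at least one index. Then tridiagonality forces all cross terms to vanish. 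In the Laguerre case $L=B^tB$ is tridiagonal as well, so the same works if the supports are separated by at least two indices.

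The plan for (i) has four steps.

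\textbf{Step 1.} Recall the $k=1$ lower bound of \cite{BBBK25}. It exhibits an event $E$, depending only on the entries $X_i,Y_i$ with indices in a window $[1,L]$ where $L\asymp t^{1/2}n^{1/3}$, on which a test vector supported in $[1,L]$ has Rayleigh quotient $\ge 2\sqrt n+tn^{-1/6}$. This event has probability $\ge\exp(-\frac{2\beta}{3}t^{3/2}(1+\vep))$.

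\textbf{Step 2.} Place $k$ disjoint windows $I_j=[(j-1)(L+1)+1,\,(j-1)(L+1)+L]$. Window $j$ starts at index $s_j\le kL+k$, which is $o(n)$ once $\gamma$ is small. On each window define the analogous event $E_j$ for the shifted block of entries. The $Y_i$ there are $\chi_{\beta(n-i)}$ with $n-i=n(1-O(kL/n))$, so their law agrees with that on the first window up to a relative change of order $kt^{1/2}n^{-2/3}\le k\gamma^{1/2}$ in the degrees of freedom. Rerunning the \cite{BBBK25} computation with $n$ replaced by $n-s_j$ yields $\P(E_j)\ge\exp(-\frac{2\beta}{3}t^{3/2}(1+2\vep))$, after shrinking $\gamma$ so that the shift costs only a $(1+\vep)$ factor in the target level. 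Equivalently, one aims at a slightly higher level $t(1+\vep)$ measured in the $n-s_j$ scaling.

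\textbf{Step 3.} The $E_j$ depend on disjoint sets of independent entries, so they are independent. Their intersection therefore has probability $\ge\exp(-\frac{2\beta k}{3}t^{3/2}(1+2\vep))$, and on it $\lambda_k\ge 2\sqrt n+tn^{-1/6}$ by the orthogonality observation above.

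\textbf{Step 4.} For (ii), use the bidiagonal $B$. The quadratic form is $\|Bv\|^2$, and for supports separated by a gap of at least one index, $Bv_i\perp Bv_j$. This holds because $Bv$ is supported on $\mathrm{supp}(v)\cup(\mathrm{supp}(v)+1)$. Uniformity over $m/n\le M$ is inherited from \cite{BBBK25}.

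The main obstacle is Step 2. The $k=1$ argument must be localizable: we need that the event in \cite{BBBK25} genuinely depends only on an $O(t^{1/2}n^{1/3})$-window of entries, and that the estimate is robust under shifting the window start by $O(kL)$ with only a $(1+\vep)$ loss in the exponent. To handle this, first check that the optimal test vector there, roughly a profile $\sqrt{x}$-type function truncated at scale $t^{1/2}n^{1/3}$, can be cut off with negligible loss. Then observe that the relevant costs are Gaussian/chi large deviations of local averages of the entries, which are continuous in $n$. The restriction $t\le\gamma n^{2/3}$ ensures $kL\ll n$, so these local parameters barely change.
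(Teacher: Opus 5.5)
Your proposal is correct and is essentially the paper's argument: $k$ shifted, non-adjacent copies of the single-eigenvalue test vector, vanishing cross terms by tridiagonality, independence of the $k$ quadratic forms, and a product of $k$ single-vector bounds, with your re-indexing $n\to n-s_j$ playing the role of the paper's observation that the only $i$-dependent term, $\frac{1}{\sqrt n}\sum_j (j+1)v_i^2(j)$, is of lower order. Two corrections. First, the test vector is a discretized $\sech$ bump (not a $\sqrt{x}$-profile) supported in a window of length $\approx 2n^{1/3}$, so the shift costs only $O(k)$ in units of $t$ and is absorbed by taking $t\ge t_{\vep,\beta,k}$ large, not by shrinking $\gamma$. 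Second, the \cite{BBBK25} event lives on the Gaussianized matrix $\wih{H}_{n,\beta,p}$, not on the original entries, so you must transfer back to $H_{n,\beta}$ via the KMT norm bound with $x=\vep t n^{1/3}$; this transfer is where $t\le\gamma n^{2/3}$ with $\gamma$ small is actually needed.
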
 

\begin{theorem}
    \label{t: ltub} 
     For $k\in\{1,2\}$ and $M\geq 1$ and any $\vep,\delta>0$ small enough there exist $n_{\vep,\delta},n_{\vep,\delta,M} \in \N$ and $t_{\vep,\delta},t_{\vep,\delta,M}>0$, such that 
    \begin{enumerate}[label=(\roman*), font=\normalfont]
        \item  for any $n \geq n_{\vep,\delta}$ and $t_{\vep,\delta} \leq t \leq n^{\frac 16-\delta}$ we have
        \[
        \P \l(\lambda_{k}(H_{n,1}) \leq 2\sqrt{n} -tn^{-1/6}\r) \leq \exp \l({-\frac{1}{24}t^{3}\l(1-\varepsilon\r)}\r).
        \]
        \item Consider any pair of natural numbers $\l(m,n\r)$ with $  m+1>n  \geq n_{\varepsilon,\delta,M}$ and $ \frac{m}{n} \leq M$.  Further, consider any real number $t$ with $t_{\vep,\delta,M} \leq t \leq n^{\frac 16-\delta}$. Then we have
        \[
        \P \l(\lambda_{k}(L_{n,m,1}) \leq \l(\sqrt{m} +\sqrt{n}\r)^2-t\l(\sqrt{m n} \r)^{-1/3} \l(\sqrt{m} +\sqrt{n}\r)^{4/3}\r) \leq \exp \l({-\frac{1}{24}t^{3}\l(1-\varepsilon\r)}\r).
        \]
    \end{enumerate}
\end{theorem}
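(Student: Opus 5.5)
For $k=1$ the statement is the lower tail upper bound for the largest eigenvalue at $\beta=1$, already proved in \cite{BBBK25}. So the real content is $k=2$. My plan is to reduce the second largest GOE/LOE eigenvalue to the largest GUE/LUE eigenvalue through the Forrester--Rains superposition--decimation identity \cite{FR01}, and then use the known sharp bounds for the largest eigenvalue at $\beta=1$ and $\beta=2$.

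\textbf{Hermite case.} Let $A\sim$ GOE$_n$ and $B\sim$ GOE$_{n+1}$ be independent. The superposition identity says that the even-positioned points of the union of the two spectra (second, fourth, \dots, in decreasing order) have the law of GUE$_n$. In particular, the second largest point of the union is distributed as $\lambda_1(H_{n,2})$.

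Now fix $x=2\sqrt n - t n^{-1/6}$ and suppose $\lambda_2(A)\le x$ and $\lambda_1(B)\le x$. Then at most one point of the union, namely $\lambda_1(A)$, lies above $x$, so the second largest point of the union is at most $x$. By independence,
\[
\P\bigl(\lambda_2(A)\le x\bigr)\,\P\bigl(\lambda_1(B)\le x\bigr)\le \P\bigl(\lambda_1(H_{n,2})\le x\bigr).
\]
I would then bound the two known factors as follows.
\begin{itemize}
\item By the $\beta=2$ lower tail upper bound of \cite{BBBK25}, the right-hand side is at most $\exp(-\tfrac{2}{24}t^3(1-\vep))$.
\item By the $\beta=1$ lower tail lower bound for the largest eigenvalue (\cite{LR10,BBBK25}), $\P(\lambda_1(B)\le x)\ge \exp(-\tfrac1{24}t^3(1+\vep))$.
\end{itemize}
Dividing gives
\[
\P\bigl(\lambda_2(A)\le x\bigr)\le \exp\bigl(-\tfrac1{24}t^3(1-3\vep)\bigr),
\]
which is the claim after renaming $\vep$. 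The constant works out because the GUE cost $2\beta/24$ at $\beta=2$ exceeds the GOE cost of $\lambda_1(B)$ by exactly $1/24$.

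One bookkeeping point is that $B$ has size $n+1$. Its natural centering is $2\sqrt{n+1}=2\sqrt n+O(n^{-1/2})$. Expressed in the $n^{-1/6}$ scale this shifts $t$ by $O(n^{-1/3})$, which changes $t^3$ by $O(t^2n^{-1/3})$. This is $o(t^3)$, and in fact $o(1)$ throughout $t\le n^{1/6-\delta}$. The $n^{-1/6}$ versus $(n+1)^{-1/6}$ scale mismatch is even smaller. I expect the restriction $t\le n^{1/6-\delta}$ to come from the ranges in which the cited lower-tail lower bounds and the $\beta=2$ upper bound hold simultaneously, so I would check that both apply in this window.

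\textbf{Laguerre case.} I would use the analogous Forrester--Rains identity for Laguerre ensembles: the even-positioned points of the union of independent LOE ensembles of sizes $n$ and $n+1$, with suitably matched parameters, have the law of an LUE. The same sandwich argument then gives
\[
\P\bigl(\lambda_2(L_{n,m,1})\le x\bigr)\,\P\bigl(\lambda_1(L_{n+1,m',1})\le x\bigr)\le \P\bigl(\lambda_1(L_{n,m'',2})\le x\bigr).
\]
The ingredients are the LOE lower tail lower bound \cite{BGHK21,BBBK25} and the LUE lower tail upper bound \cite{BBBK25}, both uniform over $m/n\le M$.

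\textbf{Main obstacle.} The step I expect to be hardest is getting the Laguerre identity exactly right. It must hold for the densities \eqref{Laguerre density}, which may require a different parametrization or a scaling of the variables, and the parameters $m',m''$ are shifted by $O(1)$ from $m$. I then need to check that the resulting shifts in the centering $(\sqrt m+\sqrt n)^2$ and in the scale $(\sqrt{mn})^{-1/3}(\sqrt m+\sqrt n)^{4/3}$ perturb $t$ by only $O(n^{-1/3})$ uniformly over $m/n\le M$. If the identity is only available for integer parameters, I would first restrict to integer $m$ and then interpolate using monotonicity of $\lambda_1$ in $m$ via the bidiagonal model $B_{n,m,\beta}$.
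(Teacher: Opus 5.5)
Your sandwich argument is correct, but only conditionally on one input, and it is not the route the paper takes.

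\textbf{The paper's route.} The paper uses the decimation identity \eqref{eq:GOE-GSE relation}, not the superposition \eqref{eq: GOE-GUE relation}. Decimation gives an exact identity in law, $\lambda_2(H_{2n+1,1})\overset{d}{=}\sqrt2\,\lambda_1(H_{n,4})$. So for odd sizes the lower tail of $\lambda_2$ is literally the lower tail of the top GSE eigenvalue. Matching the centring $2\sqrt{2n+1}$ and scale $(2n+1)^{-1/6}$ against $\sqrt2\cdot 2\sqrt n$ and $\sqrt2\, n^{-1/6}$ turns $t$ into roughly $2^{-2/3}t$. The one-sided bound \cite[Theorem 1.3]{BBBK25} at $\beta=4$ then gives the constant $\frac{4}{24}\cdot\frac14=\frac1{24}$. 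Even sizes are handled by interlacing: $H_{2n-1,1}$ is a principal minor of $H_{2n,1}$, so $\lambda_2(H_{2n,1})\ge\lambda_2(H_{2n-1,1})$. The Laguerre case uses the analogous decimation identity $D_2(LOE_{2m+1,2n+1})\overset{d}{=}2LSE_{m,n}$.

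\textbf{Your route.} Your inequality $\P(\lambda_2(A)\le x)\,\P(\lambda_1(B)\le x)\le \P(\lambda_1(H_{n,2})\le x)$ is valid, and the constant bookkeeping $\frac{2}{24}-\frac1{24}=\frac1{24}$ is right. It works for every $n$ with no parity step. It is the lower-tail counterpart of how the paper uses superposition for the $\beta=1$ upper tail in Section~\ref{sec:rtub}. The Laguerre identity you were worried about is exactly \eqref{eq:LOE-LSE relation}, which the paper applies without rescaling: LOE of sizes $n$ and $n+1$ with parameters $m$ and $m+1$ superpose to LUE$_{n,m}$.

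\textbf{The price.} You consume a two-sided input where the paper needs only a one-sided one. Besides the $\beta=2$ upper bound, you need the lower bound
\[
\P(\lambda_1\le x)\ge e^{-\frac1{24}t^3(1+\vep)}
\]
for GOE and LOE with the exact constant $\frac1{24}$. For LOE it must hold uniformly over $m/n\le M$, and in both cases on the whole window $t\le n^{1/6-\delta}$. Neither \cite{LR10} nor \cite{BGHK21} can supply this: they give only $e^{-Ct^3}$ with a non-optimal $C$. With such a $C$, your division step yields $e^{-(\frac1{12}-C)t^3}$. That loses the sharp constant as soon as $C>\frac1{24}$ and gives nothing once $C\ge\frac1{12}$.

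So your proof stands exactly to the extent that \cite{BBBK25} contains a sharp lower-tail lower bound for $\lambda_1$ at $\beta=1$, in both ensembles. If that bound is proved only on a smaller window, your conclusion shrinks to that window. The paper's decimation route sidesteps this dependence entirely, which is why it is the more economical choice.
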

As corollaries of our above results, we obtain the tail behaviour of $TW_{\beta}^{(k)}$.
\begin{theorem}\label{thm: TW tails}
\hspace{1cm}
\begin{enumerate}
     \item  For $k\geq 2$ and $\beta\geq 2/k$, as $a\rightarrow\infty$, we have
    \begin{align*}
\P\l(TW_{\beta}^{(k)}>a\r)\leq \exp\l(-\frac{2\beta k}{3}a^{3/2}(1+o(1))\r).
    \end{align*}
    \item For $k\in\N$ and $\beta>0$, as $a\rightarrow\infty$, we have
    \begin{align*}
\P\l(TW_{\beta}^{(k)}>a\r)\geq \exp\l(-\frac{2\beta k}{3}a^{3/2}(1+o(1))\r).
    \end{align*}
    \item For $k\geq 1$ and $\beta>0$, as $a\rightarrow\infty$, we have
    \begin{align*}
\P\l(TW_{\beta}^{(k)}<-a\r)\leq \exp\l(-\frac{\beta }{24}a^{3}(1+o(1))\r).
\end{align*}
\end{enumerate}
\end{theorem}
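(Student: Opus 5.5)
Theorem \ref{thm: TW tails} should follow from the finite-$n$ estimates by passing to the limit through the weak convergence \eqref{eq: Hermite TW convergence}. Fix $a$ large and $\vep>0$, and write $T_n:=(\lambda_k^{(n,H,\beta)}/\sqrt n-2)n^{2/3}$. Then $\{T_n> a\}=\{\lambda_k^{(n,H,\beta)}> 2\sqrt n+an^{-1/6}\}$. Since the range $t_{\vep}\le a\le \gamma n^{2/3}$ (resp. $a\le n^{1/6-\delta}$) eventually contains any fixed $a$, each finite-$n$ bound holds for all large $n$ at that fixed $a$. Because $TW_\beta^{(k)}$ has a continuous law (the eigenvalues of $\mbox{SAO}_\beta$ are a.s. simple with continuous distribution), the portmanteau theorem gives $\P(TW^{(k)}_\beta>a)=\lim_n \P(T_n>a)$. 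The same holds for $\{T_n<-a\}$.

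\emph{Part (2).} This is immediate from Theorem \ref{t: rtlb}(i). For $a\ge t_{\vep,\beta,k}$ we get $\P(TW^{(k)}_\beta>a)\ge \exp(-\frac{2\beta k}{3}a^{3/2}(1+\vep))$, and $\vep$ is arbitrary.

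\emph{Part (1).} Theorem \ref{t: right tail upper bound for beta geq 2} only covers $\beta\ge2$ or $\beta=1$, so for general $\beta\ge 2/k$ I would use a different route: a stochastic comparison of sums of edge eigenvalues, presumably the one used to prove that theorem. The plan is as follows.
\begin{enumerate}[label=(\alph*)]
\item Show that in the limit $\sum_{i\le k}TW_\beta^{(i)}$ is stochastically dominated by an expression whose tail is governed by the largest eigenvalue of $\mbox{SAO}_{k\beta}$, rescaled by $k$. This mirrors the multi-path/block structure, in which the top $k$ eigenvalues of a $\beta$-ensemble relate to the top eigenvalue of a $k\beta$-type ensemble. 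The variational characterization $-\sum_{i\le k}\Lambda_i=\sup$ over $k$-dimensional orthonormal families of the quadratic form should give such a domination, with \eqref{eq: TW right tail bounds} at parameter $k\beta$ supplying the tail $\exp(-\frac{2k\beta}{3}a^{3/2})$ after the rescaling.
\item Since $TW^{(k)}_\beta\le TW^{(i)}_\beta$ for $i\le k$, we have $\{TW^{(k)}_\beta>a\}\subset\{\sum_{i\le k}TW^{(i)}_\beta>ka\}$, and the rescaling matches this.
\end{enumerate}
The restriction $\beta\ge 2/k$ (i.e.\ $k\beta\ge 2$) suggests that the comparison in (a) requires the $k\beta$-ensemble to lie in the regime $k\beta\ge2$ where \cite{BBBK25} gives the sharp upper bound. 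So I would apply the finite-$n$ comparison combined with \cite{BBBK25} at parameter $k\beta$ and then take limits as above. Establishing this sum-domination step with the exact constant is the main obstacle. If it fails, a fallback for $\beta\in\{1\}\cup[2,\infty)$ is to take the limit in Theorem \ref{t: right tail upper bound for beta geq 2}(i) directly.

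\emph{Part (3).} Since $TW^{(k)}_\beta\le TW_\beta^{(1)}$ is not the useful direction, I would use the variational (Rayleigh-quotient) characterization of the $k$-th eigenvalue of $\mbox{SAO}_\beta$. Pushing $\Lambda_k$ up (equivalently $TW^{(k)}$ below $-a$) requires $\mbox{SAO}_\beta$ to have no $k$ eigenvalues below $a$. I would test against $k$ disjointly supported trial functions, placing one bump of width $\asymp \sqrt a$ near $x\approx a$ and the others in adjacent windows. The minimax principle then forces, for each window, a Brownian-type deviation event whose cost matches the $k=1$ computation of \cite{RRV}, namely $\frac{\beta}{24}a^3$ to leading order, with the additional windows contributing only non-negative extra cost. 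The $k=1$ proof in \cite{RRV} already extracts $\frac{\beta}{24}a^3$ from an optimal-path bound on the Brownian term $\int f^2\,db$ over $[0,a]$. For $k$ functions on slightly shrunk intervals $[0,a(1-\eta)]$ the same estimate gives $\frac{\beta}{24}a^3(1-O(\eta))$, and sending $\eta\to0$ yields the claim.
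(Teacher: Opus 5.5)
Part (2) is fine: letting $n\to\infty$ at fixed $a$ in Theorem \ref{t: rtlb}(i) is exactly what the paper does. Parts (1) and (3), however, each have a genuine gap.

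\textbf{Part (1).} Step (a) carries the entire claim, and you give no mechanism for it. The natural Ky Fan argument does not supply one. The $k$ trial functions all see the same white noise, so averaging them (e.g.\ $g=(\frac{1}{k}\sum_i f_i^2)^{1/2}$) only returns the trivial bound $\sum_{i\le k}\Lambda_i\ge k\Lambda_1$, with no passage $\beta\to k\beta$. Proposition \ref{prop: sum of top k stoch dom} keeps $k$ fixed and needs a base case, which exists only at $\beta=2$. So your fallback reaches just $\beta\in\{1\}\cup[2,\infty)$ and misses exactly the new range $[2/k,2)\setminus\{1\}$.

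The missing idea is the decimation identity \eqref{eq: gen hermite comp} with $r=k-1$: $\lambda_k(H_{kn+k-1,2/k})\overset{d}{=}\sqrt{k}\,\lambda_1(H_{n,2k})$. After normalization and $n\to\infty$ this gives $TW^{(k)}_{2/k}\overset{d}{=}k^{2/3}TW_{2k}$. Then \eqref{eq: TW right tail bounds} at parameter $2k$ yields $\P(TW^{(k)}_{2/k}>a)=\exp(-\frac{4}{3}a^{3/2}(1+o(1)))$, which is the sharp bound at $\beta=2/k$. Theorem \ref{thm:VP result} for the pair $2/k<\beta$ with $\alpha=(k\beta/2)^{2/3}$ gives $TW^{(k)}_{2/k}\overset{\text{s.t.}}{\geq}(k\beta/2)^{2/3}TW^{(k)}_\beta$. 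Hence $\P(TW^{(k)}_\beta>a)\le\P(TW^{(k)}_{2/k}>(k\beta/2)^{2/3}a)\le\exp(-\frac{2\beta k}{3}a^{3/2}(1+o(1)))$.

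The threshold $\beta\ge 2/k$ comes from this base point, together with the fact that the $s=2/3$ comparison only transports right-tail upper bounds upward in $\beta$. It does not come from any restriction in \cite{BBBK25}: the sharp tail of $TW_{\beta'}$ is known for every $\beta'>0$.

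\textbf{Part (3).} Here the min-max implication runs the other way from how you use it. Write $\Lambda_k$ for the $k$-th smallest eigenvalue of $\mbox{SAO}_\beta$ and $R(f)$ for the Rayleigh quotient. Disjointly supported $f_1,\dots,f_k$ give $\Lambda_k\le\max_i R(f_i)$, so $\{TW^{(k)}_\beta<-a\}\subset\bigcup_i\{R(f_i)>a\}$. Only one window has to deviate, not each of them, so the union bound is governed by the cheapest window rather than by a sum of costs.

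With $g=f_i^2$ and $\int g=1$, the Gaussian cost of window $I_i$ is at most $\frac{\beta}{8}\sup_g(\int(a-x)g)^2/\int g^2=\frac{\beta}{8}\int_{I_i\cap[0,a]}(a-x)^2\,dx$. For disjoint windows the smallest of these is at most $\frac{\beta}{24k}a^3$. So fixed disjoint trial functions give the exponent $a^3$ but lose a factor $k$ in the constant. This is the shifted-support method of \cite{DV21}, and the paper points out precisely that it cannot give the sharp constant. Two further problems: shrinking each function to $[0,a(1-\eta)]$ is incompatible with disjoint supports, and the $k=1$ optimizer is $f^2\propto(a-x)_+$ on all of $[0,a]$, not a width-$\sqrt a$ bump near $x\approx a$ (whose cost is $o(a^3)$).

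The paper avoids any direct operator argument here:
\begin{enumerate}
\item It takes the $\beta=2$ asymptotics for every $k$ from \cite{TW94}.
\item It transfers them to all $\beta\le2$ by Theorem \ref{thm:VP result} with $\alpha=(\beta'/\beta)^{1/3}$, which moves left-tail upper bounds downward in $\beta$ with the correct constant $\beta/24$.
\item It uses the decimation identity $TW^{(ik)}_{2/k}\overset{d}{=}k^{2/3}TW^{(i)}_{2k}$ to reach $\beta=2k$.
\item It applies the $s=1/3$ comparison again to cover all $\beta\le 2k$, with $k$ arbitrary.
\end{enumerate}
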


\begin{remark}Note that the range of $\beta$ in Theorem \ref{t: right tail upper bound for beta geq 2} is $\{1\}\cup [2, \infty),$ while the range of $\beta$ in Theorem \ref{t: rtlb} is $(0, \infty).$ While we expect that the range of $\beta$ to be $(0,\infty)$ in Theorem \ref{t: right tail upper bound for beta geq 2} too, our proof starts with the $\beta=2$ case and uses the stochastic domination to handle  the values $\beta > 2.$ But we cannot use the stochastic domination technique in the range $\beta \in (0,2)$ as we do not have a base case to start with. The value of $\beta=1$ is handled using the decimation-superposition results.
\end{remark}

The upper bound for the upper tail in the case of the largest eigenvalue in \cite{BBBK25} was first proven for $\beta=2$ case, using the last passage percolation arguments and then the same was extended to all $\beta>2$ using the stochastic comparison of the largest eigenvalues across different $\beta$. Such stochastic comparison result was obtained in \cite{JB23}, which we state below. Denote the left hand side terms of \eqref{eq: Hermite TW convergence},\eqref{eq: Laguerre TW convergence} which are normalized and converge, as $n\rightarrow\infty$ to $TW_{\beta}$, by $N(H_{n,\beta})$ and $N(L_{n,m,\beta})$.
\begin{theorem}[Theorem 1 of \cite{JB23}]\label{Stoch dom}
Let $\frac{\beta_2}{\beta_1}=\frac{m}{n}\in \mathbb{Q}$ and $\beta_2>\beta_1$. Let $s=2/3$. Then
\begin{align*}
    \beta_1^s  N(H_{m,\beta_1})&\ \overset{s.t.}{\geq}  \beta_2^s  N(H_{n,\beta_2}).  \\
     \beta_1^s\  N(L_{m,m\kappa/n,\beta_1})&\ \overset{s.t.}{\geq}  \beta_2^s\  N(L_{n,\kappa,\beta_2}).
\end{align*}
\end{theorem}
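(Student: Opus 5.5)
The plan is to reduce the statement to a comparison of unnormalized largest eigenvalues and then prove that comparison by coupling the Dumitriu--Edelman models \eqref{eq: tridiag matrix} through the Rayleigh quotient. I treat only the Hermite case; the Laguerre case should follow the same way using $L=B^tB$ and the bidiagonal model. The step I am least sure of is the handling of the Gaussian diagonal (Step 3 below), and I would try that before anything else.

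\emph{Step 1: reduction.} Put $N:=\beta_1 m=\beta_2 n$. A direct computation gives
\[
\beta^{2/3}N(H_{n,\beta})=(\beta n)^{2/3}\Bigl(\tfrac{\sqrt{\beta}\,\lambda_1(H_{n,\beta})}{\sqrt{\beta n}}-2\Bigr).
\]
Both sides of the claimed inequality therefore have the form $N^{2/3}\bigl(N^{-1/2}\Lambda-2\bigr)$, with the same $N$. So it suffices to prove
\[
\Lambda_1:=\lambda_1\bigl(\sqrt{\beta_1}H_{m,\beta_1}\bigr)\ \overset{s.t.}{\geq}\ \Lambda_2:=\lambda_1\bigl(\sqrt{\beta_2}H_{n,\beta_2}\bigr).
\]
The matrices $\sqrt{\beta}H_{\cdot,\beta}$ have $N(0,2)$ diagonal entries. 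Their off-diagonal entries are $\chi$ variables with degrees $N-\beta_1 j$ ($1\le j\le m-1$) and $N-\beta_2 i$ ($1\le i\le n-1$) respectively. So the two off-diagonal degree sequences run linearly from $N$ down to $0$, one with a fine step $\beta_1$ and one with a coarse step $\beta_2$.

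\emph{Step 2: monotonicity and coupling of the off-diagonals.} For a symmetric tridiagonal matrix with nonnegative off-diagonals, $\lambda_1$ is nondecreasing in each off-diagonal entry: the top eigenvector can be taken entrywise nonnegative, by Perron--Frobenius after a diagonal shift. Also $\chi_a\overset{s.t.}{\geq}\chi_b$ for $a\ge b$, and $\chi^2_{a+b}\overset{d}{=}\chi^2_a+\chi^2_b$ with independent summands. Write $m/n=p/q$ in lowest terms, so that every $\beta_2$-step is exactly $p/q$ of a $\beta_1$-step. Using these facts, I would build a monotone coupling in which each coarse off-diagonal $\chi_{N-\beta_2 i}$ is dominated by an explicit combination of fine off-diagonals $\chi_{N-\beta_1 j}$, with $j$ running over the block of indices associated with $i$.

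Next take a nonnegative top eigenvector $u$ of $\sqrt{\beta_2}H_{n,\beta_2}$ and spread it into a test vector $v\in\R^m$. On block $i$ I would use a constant profile, $v_j=u_i/\sqrt{|{\rm block}_i|}$, or a slowly varying interpolation between $u_i$ and $u_{i+1}$. The aim is that the off-diagonal part of $v^t(\sqrt{\beta_1}H_{m,\beta_1})v$ dominates $2\sum_i Y_i u_iu_{i+1}$ under the coupling.

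\emph{Step 3 (main obstacle): the diagonal.} Spreading $u_i$ over a block replaces $X_iu_i^2$ by an average of fresh $N(0,2)$ variables times $u_i^2$. That is a Gaussian of smaller variance, which does not dominate $X_iu_i^2$ pathwise. The first thing I would try is to condition on the off-diagonals. Then write $\Lambda_1=\sup_v\langle v,\sqrt{\beta_1}H_{m,\beta_1}v\rangle$ as the supremum of a Gaussian process in the diagonal variables plus a deterministic term. I would compare it with the analogous process for $\Lambda_2$ by a Slepian/Gordon-type inequality. This needs the process indexed by the fine model to have matching variances and larger increments, and the extra freedom of $m>n$ coordinates should supply the larger increments.

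If that fails, the fallback is to avoid a pathwise coupling altogether. Instead I would use the joint densities \eqref{Hermite density}: show that the law of the top point for $(\beta_1,m)$, suitably rescaled, is obtained from that for $(\beta_2,n)$ by a Markov kernel that only moves points upward. This is in the spirit of the superposition--decimation identities, and I expect this route to be how the rational condition $\beta_2/\beta_1=m/n\in\mathbb Q$ gets used.
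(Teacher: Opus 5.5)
Your Step 1 reduction is correct. With $N=\beta_1 m=\beta_2 n$ both sides have the form $N^{2/3}(N^{-1/2}\Lambda-2)$, so it suffices to show $\lambda_1(\sqrt{\beta_1}H_{m,\beta_1})\overset{s.t.}{\geq}\lambda_1(\sqrt{\beta_2}H_{n,\beta_2})$. Your monotonicity of $\lambda_1$ in the off-diagonal entries of a Jacobi matrix is also correct. But the proposal is not a proof: Step 3 is left open, as you say yourself. Neither fallback is carried out. For the Slepian/Gordon comparison between processes indexed by $\R^m$ and $\R^n$, no variance or increment condition is checked. For the ``upward'' Markov kernel, no kernel is constructed. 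The obstacle is also self-inflicted. It comes only from matching all $m-1$ fine off-diagonals against the $n-1$ coarse ones block by block. That forces you to spread $u$ over blocks, which replaces $X_iu_i^2$ by an average of fresh diagonal Gaussians, and such an average cannot dominate $X_iu_i^2$ pathwise.

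The missing observation is that you should compare index by index. For $1\le i\le n-1$ you have $N-\beta_1 i\ge N-\beta_2 i$, because $\beta_1<\beta_2$. Let $Y'_i\sim\chi_{N-\beta_1 i}$ and $Y_i\sim\chi_{N-\beta_2 i}$ be the $i$-th off-diagonal entries of $\sqrt{\beta_1}H_{m,\beta_1}$ and $\sqrt{\beta_2}H_{n,\beta_2}$. Couple the two matrices so that their first $n$ diagonal entries coincide and $Y'_i\ge Y_i$ almost surely for $i\le n-1$, e.g.\ via $\chi^2_a=\chi^2_b+\chi^2_{a-b}$ with an independent summand. Then ignore the last $m-n$ rows and columns of the fine matrix. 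Take $u\ge 0$ a unit top eigenvector of $\sqrt{\beta_2}H_{n,\beta_2}$ and set $v=(u,0,\dots,0)\in\R^m$. Using $u_iu_{i+1}\ge 0$, you get pathwise
\[ \Lambda_1\ \ge\ v^t\sqrt{\beta_1}H_{m,\beta_1}v=\sum_{i\le n}X_iu_i^2+2\sum_{i<n}Y'_iu_iu_{i+1}\ \ge\ \sum_{i\le n}X_iu_i^2+2\sum_{i<n}Y_iu_iu_{i+1}=\Lambda_2 . \]
So the diagonal issue never arises. This is just interlacing plus your monotonicity fact. It is the same coupling the paper uses to prove Proposition \ref{prop: sum of top k stoch dom}, whose $k=1$ case is the unnormalized form of the statement.

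The Laguerre case works the same way. The normalized quantity depends only on $\beta\kappa$, $\beta n$ and $\beta\lambda_1$, so the same reduction applies. In $\sqrt{\beta}B$ the degrees satisfy $\beta_1(m\kappa/n-i+1)=\beta_2\kappa-\beta_1(i-1)\ge\beta_2\kappa-\beta_2(i-1)$ and $\beta_1(m-i)\ge\beta_2(n-i)$. Moreover $\lambda_1(B^tB)=\sigma_1(B)^2$. For a nonnegative matrix, $\sigma_1$ is nondecreasing in the entries and can only grow when you pass from a submatrix to the full matrix. The rationality of $\beta_2/\beta_1$ is used only to get integer sizes with $\beta_1 m=\beta_2 n$; no superposition--decimation is needed.
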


As observed in \cite{JB23}, the same argument does not extend to $k$-th largest eigenvalue. But we prove in the following result that one can stochastically compare the sum of top $k$ eigenvalues across different $\beta$. 

\begin{proposition}\label{prop: sum of top k stoch dom}
    Let $\frac{\beta_2}{\beta_1}=\frac{s}{n}\in\mathbb{Q}$ and $\beta_1<\beta_2$. Then for any $k\leq  n\in\N$ and $s\in\N$, we have
    \begin{align*}
\sqrt{\beta_2}\sum\limits_{i=1}^k\lambda_i(H_{n,\beta_2})&\overset{\text{s.t.}}{\leq} \sqrt{\beta_1}\sum\limits_{i=1}^k\lambda_i(H_{s,\beta_1})\\
{\beta_2}\sum\limits_{i=1}^k\lambda_i(L_{n,m,\beta_2})&\overset{\text{s.t.}}{\leq} {\beta_1}\sum\limits_{i=1}^k\lambda_i(L_{s,m\frac{s}{n},\beta_1})
    \end{align*}
\end{proposition}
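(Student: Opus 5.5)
The plan is to reduce the comparison of sums of top eigenvalues to Ky Fan's maximum principle, and then build a coupling through the Dumitriu--Edelman tridiagonal models. Ky Fan's principle says that for a symmetric matrix $A$ of size $N$,
\[
\sum_{i=1}^k \lambda_i(A)=\max\Big\{\operatorname{tr}\big(V^tAV\big):\ V\in\R^{N\times k},\ V^tV=I_k\Big\}.
\]
This is the feature that survives for sums but fails for individual $\lambda_k$, which is why the argument of \cite{JB23} does not extend directly. Consequently, if $A$ (of size $s$) and $B$ (of size $n$) can be coupled together with an isometry $\Phi:\R^n\to\R^s$, possibly random and measurable with respect to the coupling, such that $\Phi^tA\Phi\succeq B$ or even $\Phi^t A\Phi\overset{d}{=}B$ jointly, then every $k$-frame $V$ for $B$ gives the $k$-frame $\Phi V$ for $A$. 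Hence $\sum_{i\le k}\lambda_i(B)\le \sum_{i\le k}\lambda_i(A)$ almost surely under the coupling, which is the desired stochastic domination. So the whole task is to produce such a compression with $A=\sqrt{\beta_1}H_{s,\beta_1}$ and $B=\sqrt{\beta_2}H_{n,\beta_2}$ in the Hermite case, and with $A=\beta_1 L_{s,ms/n,\beta_1}$ and $B=\beta_2 L_{n,m,\beta_2}$ in the Laguerre case. The scaling factors in the statement are exactly those that remove the $1/\sqrt\beta$ prefactors in \eqref{eq: tridiag matrix}. As a result, $A$ and $B$ have unscaled entries: $N(0,2)$ on the diagonal, $\chi_{\beta_1(s-j)}$ off the diagonal for $A$, and $\chi_{\beta_2(n-i)}$ off the diagonal for $B$.

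The key steps, in order, are as follows.

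\textbf{Step 1: reduce to a single explicit comparison.} I would use the rationality assumption $\beta_2/\beta_1=s/n$. This gives $\beta_2(n-i)=\beta_1(s-is/n)$ when $s/n$ is an integer $r$, and in general I would pass to a common refinement of the index sets. The degrees of freedom on the two sides then match up to additivity, via $\chi^2_a+\chi^2_b\overset{d}{=}\chi^2_{a+b}$.

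\textbf{Step 2: construct the compression.} I would group the $s$ coordinates of $A$ into $n$ consecutive blocks of size $r$. The isometry $\Phi$ will be defined inductively, in the spirit of a Lanczos/Householder tridiagonalization: $\Phi e_1$ is a unit vector supported on the first block, and $\Phi e_{i+1}$ is obtained by orthogonalizing $A\Phi e_i$ against $\Phi e_i$ and $\Phi e_{i-1}$. With this choice $\Phi^tA\Phi$ is automatically tridiagonal. Its off-diagonal entries are norms of projections of Gaussian/chi vectors, so rotation invariance of the underlying Gaussians identifies them as $\chi$ variables with the degrees of freedom accumulated in Step 1.

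\textbf{Step 3: Laguerre case.} I would run the same procedure on the bidiagonal factor. Here I would compress $B_{s,ms/n,\beta_1}$ from both sides by isometries $\Phi,\Psi$, so that $\Psi^tB\Phi$ is bidiagonal with matching $\chi$ parameters; this uses the relation $\beta_2(m-i+1)\leftrightarrow\beta_1(ms/n-\cdots)$. I would then use $\sigma$-Ky Fan, namely $\sum_{i\le k}\sigma_i^2(\Psi^tB\Phi)\le\sum_{i\le k}\sigma_i^2(B)$.

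The hard step is Step 2: showing that the compressed diagonal entries are exactly $N(0,2)$, or dominated in the Loewner sense, and that the off-diagonals are independent $\chi_{\beta_2(n-i)}$ rather than merely having the right marginal laws. A naive block-averaging vector picks up the internal off-diagonal entries $Y_j$ of $A$ inside each block, which gives a positive drift to the diagonal. That drift only helps the inequality, but it destroys exact equality in law. I would therefore first try to settle for the one-sided statement $\Phi^tA\Phi\succeq B'$ with $B'\overset{d}{=}B$, absorbing the positive surplus, since domination is all that is needed. If that fails, my fallback is the dynamical (Dyson-type) representation used in \cite{JB23}, applied to the trace of the top-$k$ projection instead of the top eigenvalue.
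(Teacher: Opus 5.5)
\textbf{There is a genuine gap.} Your reduction through Ky Fan is valid, but the whole argument rests on Step 2, which is neither carried out nor viable as described.

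For general $\beta$ the tridiagonal model has no underlying rotation-invariant Gaussian matrix, so ``rotation invariance'' has nothing to act on. Any compression other than a coordinate one has entries that are mixtures or nonlinear functions of the $X_j,Y_j$, and these are not independent $N(0,2)$ and $\chi_{\beta_2(n-i)}$ variables. For example, with $r=2$ and $\Phi e_i=(e_{2i-1}+e_{2i})/\sqrt2$, the diagonal becomes $\tfrac12(X_{2i-1}+X_{2i})+Y_{2i-1}$ and the off-diagonal becomes $\tfrac12 Y_{2i}$.

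More fundamentally, suppose you did find an isometry with $\Phi^tA\Phi\succeq B'$ and $B'\overset{d}{=}B$. The Poincar\'e separation theorem $\lambda_j(\Phi^tA\Phi)\le\lambda_j(A)$, together with Weyl monotonicity, would then give $\lambda_j(B)\le\lambda_j(A)$ for every $j$ individually. That is exactly the $k$-th eigenvalue comparison the paper says it cannot prove. So your plan never uses anything specific to sums: Ky Fan enters only cosmetically. Likewise, the $\chi^2_a+\chi^2_b$ bookkeeping in Step 1 is aimed at an exact decimation-type identity, which is only available for special $\beta$.

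\textbf{The missing idea is that no Loewner domination is needed.} Take $\Phi$ to be the inclusion of the first $n$ coordinates; this is what Lanczos started at $e_1$ produces on a Jacobi matrix anyway. Since $\beta_1 s=\beta_2 n$ and $\beta_1<\beta_2$, you have $\beta_1(s-i)>\beta_2(n-i)$. So you can couple the diagonals to be equal and the off-diagonal $\chi$'s monotonically. Then the leading $n\times n$ principal submatrix of $A$ has larger off-diagonal entries than $B$.

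The difference is not positive semidefinite, since it has zero trace. What you need instead is the lemma that $\sum_{i\le k}\lambda_i$ of a Jacobi matrix is nondecreasing in each off-diagonal entry $a_i$. This follows from two facts. Ky Fan makes the sum a convex function of $a_i$. Conjugation by $D=\mathrm{diag}(1,\dots,1,-1,\dots,-1)$, with the sign change after index $i$, shows the sum is even in $a_i$. An even convex function is nondecreasing on $[0,\infty)$. Cauchy interlacing then passes from the principal submatrix to the full matrix $A$.

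This off-diagonal monotonicity is the real reason sums work: it fails for a single $\lambda_k$ with $k\ge2$. For instance, $\lambda_2$ of $\begin{pmatrix}0&a\\a&0\end{pmatrix}$ is $-a$.

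The Laguerre case is the same and needs no two-sided compression. Using $\beta_1 ms/n=\beta_2 m$, all $\chi$ parameters of the bidiagonal factor dominate entrywise: $\beta_1(ms/n-i+1)\ge\beta_2(m-i+1)$ and $\beta_1(s-i)\ge\beta_2(n-i)$. Hence in the leading block of $B^tB$ the diagonal entries $D_i^2+C_i^2$ increase, which Weyl handles, and the off-diagonal entries $C_iD_{i+1}$ increase, which the lemma handles.
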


Theorem \ref{Stoch dom} is finite $n$ analogue of the following stochastic comparison result of \cite{VP2022}, which we will be using in the proof of Theorem \ref{thm: TW tails}. Although originally stated for $k=1$, the same proof works for any $k\in\N$.

\begin{theorem}[Theorem $1$ of \cite{VP2022}]\label{thm:VP result}
    Let $\beta'>\beta>0$ and $\alpha>0$ and $k\in\N$. Then $TW_{\beta}^{(k)}\overset{\text{s.t.}}{\geq}\alpha TW_{\beta'}^{(k)} $ if $\l(\frac{\beta'}{\beta}\r)^{1/3}\leq \alpha\leq \l(\frac{\beta'}{\beta}\r)^{2/3}$.
\end{theorem}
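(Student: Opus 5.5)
The plan is to follow the argument of \cite{VP2022} at the level of the stochastic Airy operator. The only place where $k=1$ could enter is the variational characterization of the bottom eigenvalue, and I would replace it by the Courant--Fischer min--max formula. Recall that $\mbox{SAO}_\beta=-\frac{d^2}{dx^2}+x+\frac{2}{\sqrt\beta}b'(x)$ on $[0,\infty)$ with Dirichlet condition, where $b$ is a standard Brownian motion. Its $k$-th smallest eigenvalue $\Lambda_k(\beta)=-TW_\beta^{(k)}$ is
\[
\Lambda_k(\beta)=\inf_{\dim V=k}\ \sup_{f\in V,\ \|f\|_2=1}\int_0^\infty \l(f'^2+xf^2\r)dx+\frac{2}{\sqrt\beta}\int_0^\infty f^2\,db ,
\]
with $V$ ranging over subspaces of the form domain of \cite{RRV}. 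Both the comparison and the scaling steps below are inequalities or identities between quadratic forms that hold pathwise for every $f$. Any such identity or monotone inequality transfers to every level $k$ of the min--max simultaneously, so it yields $\Lambda_k\le\Lambda_k'$ (or equality) for all $k$ at once. This is why the proof for $k=1$ should carry over.

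The first step is the Brownian scaling identity. Set $g(y)=f(cy)$ and $\tilde b(y)=c^{-1/2}b(cy)$. Changing variables in the form shows that $c^2\,\mbox{SAO}_\beta$ is unitarily equivalent, after $x=cy$, to
\[
-\frac{d^2}{dy^2}+c^3y+\frac{2c^{3/2}}{\sqrt\beta}\tilde b'(y).
\]
Choosing $c^3=\beta/\beta'$ makes the noise coefficient equal to $2/\sqrt{\beta'}$. The resulting operator differs from $\mbox{SAO}_{\beta'}$ only in its linear potential $c^3y$, which is at most $y$ on $[0,\infty)$. Form monotonicity then gives, pathwise under this coupling and for every $k$,
\[
(\beta/\beta')^{2/3}\Lambda_k(\beta)\le\Lambda_k(\beta').
\]
Equivalently $TW^{(k)}_\beta\ge(\beta'/\beta)^{2/3}TW^{(k)}_{\beta'}$, which is the endpoint $\alpha=(\beta'/\beta)^{2/3}$. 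Intermediate values of $c$ give a one-parameter family of operators with potential $c^3 y$ and noise strength $2c^{3/2}/\sqrt\beta$. I would use them to interpolate toward the other endpoint.

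The second endpoint $\alpha=(\beta'/\beta)^{1/3}$, and the intermediate $\alpha$, is where I expect the main obstacle. Here the potentials are matched, which corresponds to $c=(\beta'/\beta)^{1/3}$ in the reverse direction. The remaining difference is then a larger noise strength on one side, and larger noise is not monotone in the form sense. My first attempt would be to split the larger-variance Brownian motion as $\frac{2}{\sqrt\beta}b=\frac{2}{\sqrt{\beta'}}b_1+\sigma b_2$ with $b_1,b_2$ independent. I would then try to control the effect of $b_2$ via the Riccati / diffusion description of \cite{RRV}. In that description, the event $\{\Lambda_k\le\lambda\}$ equals the event that the associated Riccati diffusion $p_\lambda$ explodes at least $k$ times. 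The diffusion $dp=(x-\lambda-p^2)dx+\frac{2}{\sqrt\beta}db$ can be time-changed and compared by a pathwise monotone coupling of diffusions. Counting explosions rather than detecting the first one is again insensitive to $k$, since it uses the same monotone coupling. I would verify that the \cite{VP2022} coupling for the first explosion is monotone in the full path, and hence in the explosion count. If so, the $k=1$ proof gives the statement for all $k$ verbatim.

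Finally, a stochastic inequality $TW_\beta^{(k)}\ge\alpha TW_{\beta'}^{(k)}$ at each $\alpha$ in the range is produced directly by the corresponding choice of $c$ in the scaled Riccati comparison. Interpolating between the two endpoints after the fact would not work, because sign changes of $TW$ prevent deducing the intermediate cases from them. I would therefore run the coupling argument uniformly in $c\in[(\beta/\beta')^{1/3},1]$, checking that the drift comparison $c^3y$ versus $y$ and the noise comparison go in compatible directions throughout that interval.
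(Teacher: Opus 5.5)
\textbf{Endpoint $\alpha=(\beta'/\beta)^{2/3}$: correct.} Your first step works. With $x=cy$ and $c^3=\beta/\beta'$, the operator $c^2\,\mbox{SAO}_\beta$ has the same kinetic term and noise as $\mbox{SAO}_{\beta'}$ and a smaller potential. Min--max then transfers the pathwise form inequality to every $k$ at once, which gives this endpoint.

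\textbf{The gap: the rest of the interval.} The endpoint $\alpha=(\beta'/\beta)^{1/3}$, which the paper uses for the left tail in Theorem~\ref{thm: TW tails}, and all intermediate $\alpha$ are not proved. You defer them to an unverified claim that some coupling from \cite{VP2022} is monotone in the whole Riccati path, and the tools you name do not supply it.
\begin{itemize}
\item Splitting $\frac{2}{\sqrt\beta}b=\frac{2}{\sqrt{\beta'}}b_1+\sigma b_2$ with $b_2$ independent does not order explosion counts.
\item In your final family $c^2\,\mbox{SAO}_\beta$, $c\in[(\beta/\beta')^{1/3},1]$, the noise strength is $2c^{3/2}/\sqrt\beta$. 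This equals $2/\sqrt{\beta'}$ only at the left end, so elsewhere nothing monotone is being compared.
\end{itemize}
The underlying misdiagnosis is that multiplying by $c^2$ pins the kinetic coefficient at $1$, leaving one free parameter. Then noise-matching yields only the $2/3$ endpoint, whichever operator you rescale, and matching potentials forces $c=1$.

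\textbf{Missing idea: treat the overall multiplier as a second parameter.} With $x=cy$, $g(y)=f(cy)$ and $\tilde b(y)=c^{-1/2}b(cy)$, the Rayleigh quotient of $m\,\mbox{SAO}_\beta$ is
\[
\frac{m c^{-2}\int g'^2\,dy+mc\int y\, g^2\,dy+\frac{2m c^{-1/2}}{\sqrt\beta}\int g^2\,d\tilde b}{\int g^2\,dy}.
\]
Choose $m=(\beta/\beta')^{1/2}c^{1/2}$, so that the noise coefficient is exactly $2/\sqrt{\beta'}$. Then:
\begin{itemize}
\item $mc^{-2}\le 1$ if and only if $c\ge(\beta/\beta')^{1/3}$;
\item $mc\le 1$ if and only if $c\le(\beta'/\beta)^{1/3}$.
\end{itemize}
Since $\int g'^2$ and $\int y\,g^2$ are nonnegative, for every such $c$ this form is pathwise at most the form of $\mbox{SAO}_{\beta'}$ driven by $\tilde b$, on the same form domain. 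Min--max gives $m\Lambda_k(\beta)\le\Lambda_k(\beta')$ for all $k$ simultaneously, i.e.\ $TW^{(k)}_\beta\ge m^{-1}TW^{(k)}_{\beta'}$ stochastically.

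As $c$ ranges over $[(\beta/\beta')^{1/3},(\beta'/\beta)^{1/3}]$, the factor $m^{-1}=(\beta'/\beta)^{1/2}c^{-1/2}$ sweeps exactly $[(\beta'/\beta)^{1/3},(\beta'/\beta)^{2/3}]$. Your choice $c^3=\beta/\beta'$ is one end, where the slack is in the potential. At $c=(\beta'/\beta)^{1/3}$, potential and noise both match and the slack sits in the kinetic term, with coefficient $\beta/\beta'<1$. That is the $1/3$ endpoint, and no Riccati coupling is needed. The same rescaling also handles every intermediate $\alpha$ with an explicit coupling, which settles your correct concern that one cannot interpolate between endpoints after the fact. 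This two-parameter pathwise comparison is what makes the paper's remark, that the $k=1$ argument carries over to all $k$, go through.
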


A equivalent form of Theorem \ref{thm:VP result} is that for $s\in\{\frac{1}{3},\frac{2}{3}\}$ and $\beta_2>\beta_1$ and $k\in\N$, we have
\begin{align}\label{eq: TW eq form}
    \beta_{1}^{s}TW_{\beta_1}^{(k)}\overset{\text{s.t.}}{\geq}\beta_{2}^{s}TW_{\beta_2}^{(k)}.
\end{align}

Although Theorem \ref{thm:VP result} holds for both $s\in\{\frac{1}{3},\frac{2}{3}\}$, its finite $n$ analogue is known only for $s=2/3$. There are no known cases of finite $n$ analogues of Theorem \ref{thm:VP result} for $s=1/3$. Unlike for $s=2/3$, even in the paper \cite{VP2022}, there is no explanation for $s=1/3$ stochastic comparison result that emerges from the analysis of stochastic Airy operator. We now provide a finite $n$ version of Theorem \ref{thm:VP result} for $s=1/3$.
\begin{theorem}\label{Stoch dom s=1/3}
Let $r,k\in\Z^+$ and $\beta_2=2(r+1)$ and $\beta_1=2/(r+1)$. Let $s=1/3$ and $n_r=(r+1)n$ and $m_r=(r+1)m$. Then
\begin{align*}
    \beta_1^s  \l(\frac{\lambda_k(H_{n_r+r,\beta_1})}{\sqrt{n_r}}-2\r)\l(n_r\r)^{2/3} \ &\overset{\text{s.t.}}{\geq}\  \beta_2^s  \l(\frac{\lambda_k(H_{n,\beta_2})}{\sqrt{n}}-2\r)n^{2/3},\\
 \beta_1^s  {\l(\sqrt{m_r n_r}\r)^{1/3}}{\l(\sqrt{m_r}+\sqrt{n_r}\r)^{2/3}}&\l(\frac{\lambda_k(L_{n_r+r, m_r+r, L,\beta_1})}{\l(\sqrt{m_r}+\sqrt{n_r}\r)^{2}}-1\r) \ \\ \overset{\text{s.t.}}{\geq}\  &\beta_2^s  {\l(\sqrt{m n}\r)^{1/3}}{\l(\sqrt{m}+\sqrt{n}\r)^{2/3}}\l(\frac{\lambda_k(L_{n,m,\beta_2})}{\l(\sqrt{m}+\sqrt{n}\r)^{2}}-1\r).   
\end{align*}
In particular, in both cases, taking $n\rightarrow\infty$ limit, we obtain \eqref{eq: TW eq form} for the above mentioned values.
\end{theorem}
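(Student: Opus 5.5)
The plan is to prove this through an exact distributional identity rather than through tridiagonal models. The identity is Forrester's decimation relation linking $\beta_1=2/(r+1)$ to $\beta_2=2(r+1)$, the same superposition–decimation circle of ideas as \cite{FR01}. In the Hermite case it says the following. Let $\mu_1\ge\dots\ge\mu_N$ be distributed as \eqref{Hermite density} with $\beta=\beta_1$ and $N=(r+1)n+r=n_r+r$. Then the decimated subsequence $(\mu_{(r+1)j})_{j=1}^{n}$ has the same law as $c\,(\lambda_j^{(n,H,\beta_2)})_{j=1}^n$ for an explicit constant $c$. The first step is to record this identity in the normalization \eqref{Hermite density} and pin down $c$. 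I expect $c=\sqrt{r+1}$. This can be cross-checked by matching spectral edges: $2\sqrt{n_r+r}\approx 2\sqrt{(r+1)n}$ against $2\sqrt n$. I will also check it directly by tracking the Gaussian weight $e^{-\beta\lambda^2/4}$ through the Dixon–Anderson integral that underlies the decimation.

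Granting the identity, the comparison is pathwise. Since $(r+1)k\ge k$, we have $\mu_k\ge \mu_{(r+1)k}\overset{d}{=}\sqrt{r+1}\,\lambda_k(H_{n,\beta_2})$, so $\lambda_k(H_{n_r+r,\beta_1})\overset{\text{s.t.}}{\ge}\sqrt{r+1}\,\lambda_k(H_{n,\beta_2})$. It remains to apply the centring and scaling. The map $x\mapsto (x/\sqrt{n_r}-2)n_r^{2/3}$ is increasing, and it sends $\sqrt{r+1}\,\lambda$ to $(r+1)^{2/3}(\lambda/\sqrt n-2)n^{2/3}$, because $\sqrt{n_r}=\sqrt{r+1}\sqrt n$. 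Multiplying by $\beta_1^{1/3}$ uses $\beta_1^{1/3}(r+1)^{2/3}=2^{1/3}(r+1)^{1/3}=\beta_2^{1/3}$, which gives exactly the first displayed inequality. This computation is also what explains why $s=1/3$ appears.

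The Laguerre case follows the same steps. I will use the Laguerre version of the decimation identity: take every $(r+1)$-th point of the $\beta_1$ Laguerre ensemble with $n_r+r$ points and parameter $m_r+r$. The exponent $\tfrac{\beta}{2}(m-n+1)-1$ then has to match between the two ensembles; note that $\tfrac{\beta_1}{2}(m_r-n_r+1)=\tfrac{1}{r+1}\big((r+1)(m-n)+1\big)$. The result should be distributed as $(r+1)$ times the $\beta_2$ ensemble with parameters $(n,m)$, up to an explicit scaling. The edge check gives scale $(\sqrt{m_r}+\sqrt{n_r})^2/(\sqrt m+\sqrt n)^2=r+1$. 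The normalizing prefactor then scales as $(\sqrt{m_rn_r})^{1/3}(\sqrt{m_r}+\sqrt{n_r})^{2/3}=(r+1)^{2/3}(\sqrt{mn})^{1/3}(\sqrt m+\sqrt n)^{2/3}$, so the same $(r+1)^{2/3}$ appears and the argument closes exactly as before. Finally, letting $n\to\infty$ and using \eqref{eq: Hermite TW convergence}, stochastic order is preserved under weak limits, and this yields \eqref{eq: TW eq form} with $s=1/3$.

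The main obstacle is the first step: stating the decimation identities with correct parameters and constants in the paper's normalizations. For Laguerre in particular, the shift in the exponent ($m_r+r$ versus $m$, with the $-1$ and the $\beta/2$ factors) has to be reconciled with the available literature form. If the literature identity comes with a different weight convention, I will verify it by writing the joint density of the decimated points as an iterated Dixon–Anderson integral over the $r$ intermediate points in each gap. Then I will check that this integral reproduces $|\Delta|^{\beta_2}$ and the correct one-body weight.
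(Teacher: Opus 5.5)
Your proposal is correct and matches the paper's proof. The paper likewise uses $\lambda_k\ge\lambda_{(r+1)k}$, then applies the decimation identities \eqref{eq: gen hermite comp} and \eqref{eq: gen laguerre comp}, whose constants $\sqrt{r+1}$ and $r+1$ agree with your edge-matching prediction, and then normalizes, with $s=1/3$ arising from $\beta_1^{1/3}(r+1)^{2/3}=\beta_2^{1/3}$; your Laguerre bookkeeping also checks out, since under $\lambda=(r+1)x$ Forrester's map $x^ae^{-x}\mapsto x^{(r+1)a+2r}e^{-(r+1)x}$ sends parameters $(n_r+r,m_r+r)$ at $\beta_1$ exactly to $(n,m)$ at $\beta_2$.
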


Although we do not have a proof, we believe that Proposition \ref{prop: sum of top k stoch dom} holds for $k$-th largest eigenvalues as well. We now introduce the superposition and decimation results which we will use in the proof of Theorem\ref{t: ltub}, Theorem \ref{thm: stronger upper tail} and Theorem \ref{thm: TW tails}.


\subsection{Superposition and decimation results:}

Superposition of matrix ensembles refers to the ensemble constructed from union of two independent ensembles, while a decimation refers to the new ensemble obtained by observing only every second eigenvalue. In the cases of the classical matrix ensembles with
orthogonal symmetry, it is known that forming superpositions and decimations gives classical matrix ensembles with unitary and symplectic symmetry. See \cite{F24, FR01, FR05, FR04, F07} for more detailed expositions and results.

 To state these consequences, let $\mathrm{ME}_{\beta,n}(g)$ refer to the matrix ensemble specified by the density function,

\begin{align}
\label{eq: decimation density}
\frac{1}{C} \prod_{l=1}^n g(x_l) \prod_{1 \le j < k \le n} |x_k - x_j|^\beta 
\end{align}

(unless stated otherwise, $C$ will denote some normalization), and let the eigenvalues in \eqref{eq: decimation density} be ordered
\begin{align*}
    x_1 > x_2 > \dots > x_n.
\end{align*}

Denote by $\mathrm{D}_p(\mathrm{ME}_{\beta,n}(g))$ the joint marginal distribution of $x_p, x_{2p}, x_{3p}, \ldots$ in (1.12). Following the nomenclature of \cite{F07}, the D here stands for a decimation procedure. Recall that for $\beta=1,2,4$ in \eqref{Hermite density}, we refer it as $GOE_n/GUE_n/GSE_n$, respectively. For \eqref{Laguerre density}, we refer the same as $LOE_{m,n}/LUE_{m,n}/LSE_{m,n}$ respectively. More generally, \cite{F07}[Theorem 2] gives,
\begin{align}\label{eq: gen dec results}
\mathrm{D}_{r+1}(\mathrm{ME}_{2/(r+1),(r+1)n+r}(x^a e^{-x})) &= \mathrm{ME}_{2(r+1),n}(x^{(r+1)a+2r} e^{-(r+1)x}), \notag \\
\mathrm{D}_{r+1}(\mathrm{ME}_{2/(r+1),(r+1)n}(e^{-x})) &= \mathrm{ME}_{2(r+1),n}(e^{-(r+1)x}), \notag \\
\mathrm{D}_{r+1}(\mathrm{ME}_{2/(r+1),(r+1)n+r}(e^{-x^2})) &= \mathrm{ME}_{2(r+1),n}(e^{-(r+1)x^2}). 
\end{align}

For the following relations, let $n_r=(r+1)n+r$. Then \eqref{eq: gen dec results} imply that,
\begin{align}\label{eq:GOE-GSE relation}
D_2(GOE_{2N+1})\overset{d}{=} \sqrt{2} GSE_N\\
D_{r+1}(H_{(r+1)n+r,2/(r+1)})\overset{d}{=} \sqrt{r+1}H_{n,2(r+1)}\label{eq: gen hermite comp}\\
D_{r+1}(L_{n_r,m_r,2/(r+1)})\overset{d}{=} ({r+1})L_{n,m,2(r+1)}\label{eq: gen laguerre comp}.
\end{align}

By \cite{FR01}[Theorem 5.2], we also have
\begin{align}\label{eq: GOE-GUE relation}
    D_2(GOE_n\cup GOE_{n+1})&\overset{d}{=}GUE_{n}\\
D_2(LOE_{2m+1,2n+1})&\overset{d}{=}2LSE_{m,n}\\
D_2(LOE_{m,n}\cup LOE_{m+1,n+1})&\overset{d}{=}LUE_{m,n}\label{eq:LOE-LSE relation}.
\end{align}

\subsection{Outline of the proofs.} We shall now describe briefly the ideas that go into the proofs of the main results. We will start with the tail estimates for $k$-th largest eigenvalues in \eqref{eq: Hermite TW convergence}, \eqref{eq: Laguerre TW convergence}. Both the upper bounds and lower bounds of upper tails require new insights in to exploiting the tridiagonal matrix structure. For the right tails, lower bound can be established by constructing suitable test functions. For the lower bound, we modify $H_{n,\beta}$ and $L_{n,m,\beta}$ by replacing $\chi_{\beta(n-i)}$ random variables with $\sqrt{\beta\l(n-i\r)}+\frac{\zeta_i}{\sqrt{2}}$ for all $i\leq p$, where $\zeta_i$ are i.i.d.\ $N\l(0,1\r)$ random variables. We denote these modified matrices as $\wih{H}_{n,\beta,p}$ and $\wih{L}_{n,m,\beta,p}$. We use KMT theorem to couple $\chi_k$ random variables in tridiagonal matrices with Gaussians and then use Gershgorin circle theorem along with Weyl's inequality to control the difference in the spectrums of the modified and the original tridiagonal matrices. This is Proposition \ref{eq: estimate of off diagonals of the difference} proved in the beginning of Section \ref{s: rtlb}. Below we outline different pieces of the arguments for the Hermite ensemble in more detail. Similar arguments hold for Laguerre ensemble.

\subsubsection{\textbf{Upper bounds for the right tails.}} We first obtain sharp right tail upper bounds for $\beta=2$. This is done using the connection of the eigenvalues of Hermite ensemble for $\beta=2$ and the Brownian last passage percolation. As the last passage percolation model satisfies super-additivity, we use large deviation theory to obtain sharp upper bound for the right tails. Then one can obtain the same for the sum of top $k$ eigenvalues for any $\beta>2$,
using the stochastic domination result from Proposition \ref{prop: sum of top k stoch dom}.
This results in sharp upper bounds for the right tails for $\beta\geq 2$, for the range of $t$ up to $\delta n^{2/3}$. To cover $\beta=1$ case, we use the superposition decimation results \eqref{eq: GOE-GUE relation}, to compare $\beta=2$ and $\beta=1$ cases. The proofs are in Section \ref{sec:rtub}.

\subsubsection{\textbf{Lower bounds for the right tails.}} We modify the matrix $H_{n,\beta}$ to $\wih{H}_{n,\beta,p_k}$, by replacing  $p_k\asymp  n^{1/3}$ of the $\chi$ random variables with  appropriate Gaussian random variables. Then we  construct vectors $v_k$ and use Gaussian tail bounds to conclude that  
\begin{align*}
    \< \wih{H}_{n,\beta,p_k}v_i,v_i\>\geq\l(2\sqrt{n}+tn^{-1/6}\r) \<v_i,v_i\>
\end{align*}
occurs with probability at least as that of the lower bound of Theorem \ref{t: rtlb}. The vector $v_1$ we construct is such that the non-zero part of the vector is simply the discretized form of the function used in \cite{RRV} to obtain lower bound for right tails of $TW_{\beta}$. To construct the remaining vectors we shift the support of $v_1$ and analyze the resulting changes in the calculations done for $v_1$. This method of constructing new vectors with shifted support from $v_1$, is used in \cite{DV21} to get the exponents $x^3$ for the left tails of $k$-th largest eigenvalues of $H_{n,\beta}$. We analyze the effect of shifting the support and show that the precise constant can be obtained for the right tail lower bound. We also show exactly why such analysis fails to obtain sharp constants for upper bounds for the left tails. We then use the KMT theorem (in Proposition \ref{eq: estimate of off diagonals of the difference}) to argue that the lower bound carries over to the right tails of  $\lambda_{k}(H_{n,\beta})$. The proofs are  in Section \ref{s: rtlb}.

\subsection*{Organization of this paper} The rest of the paper is organized as follows. We prove the right tail upper bounds (Theorem \ref{t: right tail upper bound for beta geq 2}) in Section \ref{sec:rtub}, right tail lower bounds (Theorem \ref{t: rtlb}) in Section \ref{s: rtlb}. In each section, the Hermite case is dealt with first, and then the more complicated Laguerre case is tackled. The proof sketch of Theorem \ref{thm: stronger upper tail}, which is similar to the proof of Theorem \ref{t: right tail upper bound for beta geq 2} is discussed in Remark \ref{rem: stronger upper tail}. Theorem \ref{t: ltub} is proven in Section \ref{sec: left tail bounds}. The results on stochastic domination such as Proposition \ref{prop: sum of top k stoch dom}, Theorem \ref{Stoch dom s=1/3}, Theorem \ref{thm: TW tails} are proven in Section \ref{sec: stoch dom}.

 \noindent {\bf Acknowledgments.} The author thanks Duncan Dauvergne and B\'alint Vir\'ag for several insightful discussions throughout the project.
\section{Lower bounds for right tails} 
\label{s: rtlb}
For the remaining proofs of the tail bounds we use the below proposition which allows us to study modifications of $H_{n,\beta}$ and $B_{n,m,\beta}$. For $p \leq n$, let $\wih{H}_{n,\beta,p}$ be the matrix where $\chi_{\beta\l(n-i\r)}$ in $H_{n,\beta}$ are replaced by $\xi_i:=\sqrt{\beta\l(n-i\r)}+\frac{\zeta_i}{\sqrt{2}}$ for all $i\leq p$, where $\zeta_i$ are i.i.d.\ $N\l(0,1\r)$ random variables. That is,

\begin{align*}
\wih{H}_{n,\beta}=\frac{1}{\sqrt{\beta}}\begin{bmatrix}
    X_{1} & \xi_1 & 0 & 0 & \cdots & 0 & 0 & \cdots & \cdots & 0 \\
    \xi_1 & X_{2} & \xi_2 & 0 & \cdots & 0 & 0 & \cdots & \cdots & 0\\
    0 & \xi_2 & X_{3} & \xi_3 & \cdots & 0 & 0 & \cdots & \cdots & 0\\
    \vdots & \ddots & \ddots & \ddots & \ddots & \vdots & \vdots& \vdots&  \vdots & \vdots \\
    0 & \cdots  & 0 & \xi_{p} & X_{p} & Y_{p+1}& 0 &  \cdots & \cdots & 0 \\
    0 & \cdots & \cdots & 0 & Y_{p+1} & X_{p+1} & Y_{p+2} & 0 & \cdots &  0\\
    0 & \cdots & \cdots & \cdots & 0 & Y_{p+2} & X_{p+3} & Y_{p+3} & \cdots  & 0\\
    \vdots & \vdots & \vdots & \vdots & \vdots & \vdots & \ddots& \ddots& \ddots\\
     \vdots & \vdots & \vdots & \vdots & \vdots & \vdots & \ddots& Y_{n-2} & X_{n-1} & Y_{n-1}\\
    0 & \cdots & 0 & 0 & \cdots &  \cdots &  \cdots &  \cdots & Y_{n-1} &   X_{n}\\
\end{bmatrix}.
\end{align*}
where $X_1, X_2 \dots , X_n$ and $ Y_{p+1}, \dots, Y_{n-1}$ are same as in \eqref{eq: tridiag matrix}. Let $\wih{B}_{n,m,\beta,p}$ be the matrix where $D_{i}$s and $C_i$s in $B_{n,m,\beta}$ are replaced by $\sqrt{\beta\l(m+1-i\r)}+\frac{\zeta_i}{\sqrt{2}}$ and $\sqrt{\beta\l(n-i\r)}+\frac{\zeta'_i}{\sqrt{2}}$ respectively for all $i\leq p$. Here $\zeta_i$s and $\zeta'_i$s are independent families of i.i.d.\ $N(0,1)$ random variables. The entries of $\wih{H}_{n,\beta,p}$ and $\wih{B}_{n,m,\beta,p}$ are independent up to symmetry. We define $\wih{L}_{n,m,\beta,p}=\wih{B}_{n,m,\beta,p}^t\wih{B}_{n,m,\beta,p}$.

In what follows, $\|\cdot \|_2$ of a matrix denotes the spectral norm. We use the following result which is proved in \cite{BBBK25}[Proposition 3.1]. This result allows us to replace the $\chi$-distributed random variables by suitably matched Gaussian random variables, at the cost of an error that is negligible for the probability estimates under consideration.
\begin{proposition}[Proposition 3.1 of \cite{BBBK25}]\label{eq: estimate of off diagonals of the difference} For all $p\leq n/2$ and for all $x>0$ we have the following:
    $\left\lVert \wih{H}_{n,\beta,p}-H_{n, \beta}\right\rVert_2\leq c_{\beta} \frac{\log n+2x}{ \sqrt{n}}$ with probability at least $1-C n\l(3e^{-x}+2e^{-\frac{\beta\l(n-p\r)}{2}}\r)$ (for some constants $C,c_{\beta}>0$ and $n$ sufficiently large). We also have for $\frac mn$ uniformly bounded by some constant, $\l\lVert\wih{L}_{n,m,\beta,p}-L_{n,m,\beta}\r\rVert_2\leq c_{\beta} \l({\log n+2x}\r)$ with probability at least $1-C n\l(18e^{-x}+8e^{-\frac{\beta\l(n-p\r)}{2}}\r)$ (for some constants $C,c_{\beta}>0$ and $n$ sufficiently large.
\end{proposition}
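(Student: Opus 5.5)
The plan is to build an explicit coupling of each $\chi$ entry with a Gaussian, one entry at a time, and then bound the spectral norm of the difference matrix by Gershgorin. For the Hermite case, $\wih H_{n,\beta,p}-H_{n,\beta}$ has zero diagonal and its only nonzero entries are the off-diagonal terms $(\xi_i-Y_i)/\sqrt\beta$ for $i\le p$. For a symmetric tridiagonal matrix with zero diagonal, the Gershgorin circle theorem gives $\|\cdot\|_2\le 2\max_i|\text{entry}_i|$. It therefore suffices to construct, independently for each $i\le p$, a coupling of $Y_i\sim\chi_{\nu}$ ($\nu=\beta(n-i)\ge \beta n/2$) with $\zeta_i\sim N(0,1)$ such that
\[
\Bigl|Y_i-\sqrt{\nu}-\tfrac{\zeta_i}{\sqrt2}\Bigr|\le C\,\frac{\log n+2x}{\sqrt n}
\]
with probability at least $1-C(3e^{-x}+2e^{-\nu/2})$, and then take a union bound over the at most $n$ indices. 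Independence across $i$ is preserved because the couplings use disjoint randomness.

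For a single entry I would proceed as follows. Write $Y_i^2=S_\nu$, a $\chi^2_\nu$ (Gamma$(\nu/2,2)$) variable. If $\nu$ is not an integer, express it as a sum of $N\asymp \nu$ i.i.d.\ Gamma variables with shape $\nu/(2N)\in[1/4,1/2]$, so that a uniform family of summands with exponential moments is available. The KMT strong approximation then yields a standard Gaussian $Z$ with
\[
|S_\nu-\nu-\sqrt{2\nu}\,Z|\le C(\log \nu+x)
\]
outside an event of probability $Ce^{-x}$, where the constants are uniform over the finitely many summand shapes involved. Set $\zeta_i=Z$.

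Next, take square roots: $\sqrt{\nu+u}-\sqrt\nu=\frac{u}{2\sqrt\nu}-\frac{u^2}{2\sqrt\nu(\sqrt{\nu+u}+\sqrt\nu)^2}$, with $u=\sqrt{2\nu}Z+E$ and $|E|\le C(\log n+x)$. The linear term is $Z/\sqrt2+E/(2\sqrt\nu)$. The quadratic remainder is $O((Z^2+E^2/\nu)/\sqrt\nu)$ as long as $S_\nu\ge \nu/4$, say. Two further bad events must be excluded: the event $\{Z^2>2x\}$, which has probability at most $2e^{-x}$, and the event that $S_\nu$ is atypically small, which a $\chi^2$ lower-tail (Chernoff) bound makes at most $e^{-\nu/2}$ up to constants. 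Outside these events every error term is $O((\log n+x)/\sqrt n)$, using $\nu\ge\beta n/2$ because $p\le n/2$. This is where the $e^{-\beta(n-p)/2}$ term in the statement comes from.

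For the Laguerre case I would use the identity $L-\wih L=B^t(B-\wih B)+(B-\wih B)^t\wih B$, which gives $\|L-\wih L\|_2\le(\|B\|_2+\|\wih B\|_2)\|B-\wih B\|_2$. The bidiagonal difference is handled by the same entrywise coupling applied to both the $D_i$ and the $C_i$, so $\|B-\wih B\|_2\le c(\log n+2x)/\sqrt n$. For the other factor, $\|B\|_2,\|\wih B\|_2\le C\sqrt{n}$ (recall $m/n$ is bounded) via Gershgorin together with $\chi$ and Gaussian upper-tail bounds for all entries. These norm bounds contribute more exceptional events of the same types, which accounts for the larger constants $18$ and $8$. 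Multiplying the two bounds gives $c_\beta(\log n+2x)$.

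I expect the main obstacle to be making the KMT step uniform for non-integer $\nu=\beta(n-i)$ with constants independent of $i$ and $n$, which is why the Gamma decomposition into uniformly controlled summands is needed. Bookkeeping of the exceptional probabilities is the other delicate point. The remaining steps are routine Taylor expansion and union bounds.
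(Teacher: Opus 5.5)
Your proposal is correct and is essentially the argument behind the cited result; this paper's outline describes it as a KMT coupling of the $\chi$ entries with Gaussians, followed by Gershgorin and Weyl. Your version has the same ingredients: entrywise KMT coupling, linearization of the square root, Gaussian and $\chi^2$ tail bounds, a union bound over at most $n$ entries, and, for Laguerre, a reduction to the bidiagonal factor. The obstacle you anticipate does not really arise, since $\chi^2_{\beta(n-i)}$ (and $\chi^2_{\beta(m+1-i)}$ for integer $m$) is exactly a sum of i.i.d.\ $\chi^2_\beta$ variables. KMT is therefore applied to a single summand law, with constants depending only on $\beta$; your claim of ``finitely many shapes'' is not accurate for an arbitrary $N\asymp\nu$, but it is not needed. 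Likewise you can drop the lower-tail event for $S_\nu$: $(\sqrt{\nu+u}+\sqrt{\nu})^2\ge\nu$ always, so the remainder is at most $u^2/(2\nu^{3/2})$. For $x\gtrsim n$ this remainder is no longer $O((\log n+x)/\sqrt n)$; in that range use $|\sqrt{\nu+u}-\sqrt{\nu}|\le |u|/\sqrt{\nu}$ instead.
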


The following lemma allows to approximate infinite integrals with infinite Riemann sums.
\begin{lemma}[Lemma 3.2 of \cite{BBBK25}]
\label{lem:aprox_inf_int}
Let $f$ be a continuously differentiable function defined on $\R$ such that both $f$ and $f'$ are integrable. Fix $h>0$. Let $\{x_n\}_{n \in \Z}$ be any sequence such that $x_{i+1}-x_i=h$ and $z_i \in [x_{i},x_{i+1}]$ be any points. The following is true:
\[
\left \vert h\sum_{i \in \Z}f(z_i)-\int f(x)dx \right \vert \leq h \int \left \vert f'(x) \right \vert dx.
\]
\end{lemma}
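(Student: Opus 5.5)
The plan is to localize the error to each mesh interval, bound it there by the fundamental theorem of calculus, and then sum over the intervals. Since $x_{i+1}-x_i=h$ for all $i\in\Z$, we have $x_i=x_0+ih$, so the intervals $[x_i,x_{i+1}]$, $i\in\Z$, tile $\R$ and overlap only at endpoints. Hence $\int f(x)\,dx=\sum_{i\in\Z}\int_{x_i}^{x_{i+1}} f(x)\,dx$, where the series converges absolutely because $f$ is integrable. Writing $h f(z_i)=\int_{x_i}^{x_{i+1}} f(z_i)\,dx$, it suffices to control, for each $i$,
\[
\left| h f(z_i)-\int_{x_i}^{x_{i+1}} f(x)\,dx\right| \le \int_{x_i}^{x_{i+1}} |f(z_i)-f(x)|\,dx .
\]

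The key step is the local estimate. For $x\in[x_i,x_{i+1}]$, the point $z_i$ also lies in $[x_i,x_{i+1}]$. Since $f$ is $C^1$, the fundamental theorem of calculus gives
\[
|f(z_i)-f(x)|=\left|\int_x^{z_i} f'(y)\,dy\right|\le \int_{x_i}^{x_{i+1}}|f'(y)|\,dy .
\]
This bound is uniform in $x$, so integrating it over an interval of length $h$ yields
\[
\left| h f(z_i)-\int_{x_i}^{x_{i+1}} f\right|\le h\int_{x_i}^{x_{i+1}}|f'| .
\]

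It remains to sum over $i\in\Z$, and the one point needing care is that the Riemann sum $h\sum_i f(z_i)$ actually converges; this is the only possible obstacle. It follows from the local estimate itself, since
\[
h|f(z_i)|\le \int_{x_i}^{x_{i+1}}|f|+h\int_{x_i}^{x_{i+1}}|f'| ,
\]
and the right-hand side is summable in $i$ with total $\int|f|+h\int|f'|<\infty$. So the sum converges absolutely. Summing the local estimates and using the triangle inequality for absolutely convergent series then gives
\[
\left|h\sum_{i\in\Z}f(z_i)-\int f\right|\le h\sum_{i\in\Z}\int_{x_i}^{x_{i+1}}|f'| = h\int|f'| ,
\]
which is the claim.
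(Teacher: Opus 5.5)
Your proof is correct and is essentially the standard argument behind this lemma, which the paper imports from \cite{BBBK25} without reproving it: bound $\bigl|hf(z_i)-\int_{x_i}^{x_{i+1}}f\bigr|$ by $h\int_{x_i}^{x_{i+1}}|f'|$ using the fundamental theorem of calculus on each mesh interval, then sum over $i\in\Z$. Your extra check that $h\sum_i f(z_i)$ converges absolutely makes the summation step rigorous.
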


\subsection{Hermite Ensemble}
For any $\beta>0$ and small $\varepsilon>0$ and $k\in\N$, let $t$ and $ \gamma_{\varepsilon,\beta,k}$ be such that $t_{\varepsilon,\beta,k}\leq t\leq \gamma_{\varepsilon,\beta,k}n^{2/3}$ {($t_{\varepsilon,\beta,k}, \gamma_{\varepsilon,\beta,k}$ to be made precise later)}. For this subsection, we consider the matrix $\wih{H}_{n,\beta,k(p+4)}$ where $p:= \left\lceil \l(2\sqrt{t}+2\sech\l(\sqrt{t}\r)\r)\frac{n^{1/3}}{\sqrt{t}}\right\rceil$ (recall the definition of $\wih{H}_{n,\beta,p}$ from the beginning of this section). Denote $p_k:=k(p+4)$. We first obtain lower bounds for the right tails of $\lambda_{k}\l(\wih{H}_{n,\beta,p_k}\r)$.

For $i\in[k]$, we choose $k$ non-negative vectors $v_i$ such that $v_i=(v_i(0),v_i(1),\dots,v_i(n)) \in \R^{n+1}$ {(these vectors depend on $n,t$, $\varepsilon$ and $k$)}. The first vector is defined as $v_1\l(j\r)=0$ for all $j> p$ and $v_1\l(0\r)=0$. The non-zero entries are described later. For $i\in [k-1]$, define $v_{i+1}$ as $v_{i+1}(j+i(p+2))=v_{i}(j+(i-1)(p+2))$, if $j\in\{1,2,\dots,p\}$, otherwise the entries are $0$. In other words, we shift the entries and form new vectors from $v_1$. As defined, in each vector $v_i\in\R^{n+1}$, only $p$ many entries are non-zero. Note that as the matrix $\wih{H}_{n,\beta,p_k}$ is a tridiagonal matrix and the supports of $v_i$s are far enough (distance 3 apart), we have $\< \wih{H}_{n,\beta,p_k}v_i,v_j\>=0$ for $i\neq j$. As a result, 
\begin{align}\label{eq: lower bound on lambda_k}
\lambda_{k}\l(\wih{H}_{n,\beta,p_k}\r)\geq \min\limits_{i\in[k]}\frac{\< \wih{H}_{n,\beta,p_k}v_i,v_i\>}{\<v_i,v_i\>}.
\end{align}

We now show that for all  $t_{\varepsilon,\beta,k}\leq t\leq \gamma_{\vep,\beta,k}n^{2/3}$ and any $i\in[k]$, we have
\begin{align*}
    \< \wih{H}_{n,\beta,p_k}v_i,v_i\>\geq\l(2\sqrt{n}+tn^{-1/6}\r) \<v_i,v_i\>
\end{align*}
with probability at least $\exp\l(-\l(\frac{2\beta}{3}+\varepsilon\r)t^{3/2}\r)$. Due to the independence of $\< \wih{H}_{n,\beta,p_k}v_i,v_i\>$, we have that $\lambda_{k}\l(\wih{H}_{n,\beta,p_k}\r)\geq 2\sqrt{n}+tn^{-1/6}$ with probability at least $\exp\l(-\l(\frac{2\beta}{3}+\varepsilon\r)kt^{3/2}\r)$.

Fix any $i\in[k]$.
Note that $\< \wih{H}_{n,\beta,p_k}v_i,v_i\>$ is a Gaussian random variable $\mu_i+\sigma_i Z$, where $Z\sim N\l(0,1\r)$. Using the fact $\sqrt{n-j}\geq \sqrt{n}-\frac{j+1}{2\sqrt{n}}$ for $j\leq 2\sqrt{n}-1$ (choose $n_{\varepsilon}$ such that $p_k\leq 2\sqrt{n}-1$ for all $n\geq n_{\vep}$), we have that the mean $\mu$ of $\< \wih{H}_{n,\beta,p_k}v_i,v_i\>$ is given by
\begin{align}\label{eq: mean lower bound for right tail lower bound}
    \mu&= 2\sum\limits_{j=1}^{p_k-1}\sqrt{n-j}\ v_i\l(j\r)v_i\l(j+1\r)\nonumber\\
    &\geq \sum\limits_{j=1}^{p_k-1}\l(\sqrt{n}-\frac{j+1}{2\sqrt{n}}\r)\l(v_i^2\l(j\r)+v_i^2\l(j+1\r)-\l(v_i\l(j+1\r)-v_i\l(j\r)\r)^2\r)\nonumber\\
    &\geq 2\sqrt{n}\<v_i,v_i\>-\sqrt{n}\sum\limits_{j=0}^{p_k}\l(v_i\l(j+1\r)-v_i\l(j\r)\r)^2-\frac{1}{\sqrt{n}}\sum\limits_{j=1}^{p_k} \l(j+1\r) v_i^2\l(j\r) .
\end{align}
Note that the factor $\sqrt \beta$ does not appear in the expression for  $\mu$ as it gets canceled with the $\frac{1}{\sqrt{\beta}}$ in front of $\wih{H}_{n,\beta}$. Further, the first term in the second inequality should yield $2 \sqrt n \langle v_i,v_i \rangle-\sqrt{n}v_i^2(1).$ The extra $-\sqrt{n}v_i^2(1)$ is adjusted in the second sum on the third line by considering the range of $j$ starting from $0$. 

An important observation is that, due to the way the vectors $v_i$ are defined, the term 
\begin{align}\label{eq: i dep term}
    \frac{1}{\sqrt{n}}\sum\limits_{j=1}^{p_k} \l(j+1\r) v_i^2\l(j\r)
\end{align} in the third inequality above is the only term that varies with $i$. We later show that the contribution due to this term is negligible and the same lower bound on the probability works for every $i$. This is highlighted again later. This is the crucial difference from the left tail upper bound in \cite{BBBK25}, where the contribution due to this term is not negligible. This prevents us from using the same trick to obtain sharp upper bound on left tails as well.

Variance of $\< \wih{H}_{n,\beta,p_k}v_i,v_i\>$, denoted as $\sigma^2$ is given by,
\begin{align}\label{eq: variance eq hermite}
    \sigma^2&=
    \frac{4}{\beta}\sum\limits_{j=1}^{p_k}v_i^4\l(j\r)-\frac{1}{\beta}\sum\limits_{j=0}^{p_k}\l(v^2_i\l(j+1\r)-v_i^2\l(j\r)\r)^2.
\end{align}

Note that in the above equation, due to the way the vectors $v_i$ are defined, none of the terms on the right hand side depend on $i$.
Note that $\mu\leq 2\sqrt{n}\<v_i,v_i\>$. We have,
\begin{align*}
    \P\l(\mu+\sigma Z\geq\l(2\sqrt{n}+tn^{-1/6}\r)\<v_i,v_i\>\r)\geq \P\l(\rho+\sigma Z\geq\l(2\sqrt{n}+tn^{-1/6}\r)\<v_i,v_i\>\r) 
\end{align*} 
where $\rho<\mu$. Using the notation $\Delta\l(v\l(k\r)\r)=v\l(k+1\r)-v\l(k\r)$, we consider the event (taking lower bound in \eqref{eq: mean lower bound for right tail lower bound} as $\rho$),
\begin{equation}
\label{eq: Gaussian for right tail lower bound}
    2\sqrt{n}\<v_i,v_i\>-\sqrt{n}\sum\limits_{j=0}^{p_k}\l(\Delta\l(v_i\l(j\r)\r)\r)^2-\frac{1}{\sqrt{n}}\sum\limits_{j=1}^{p_k} \l(j+1\r) v_i^2\l(j\r) +\sigma_i Z\geq \l(2\sqrt{n}+tn^{-1/6}\r)\<v_i,v_i\>.
\end{equation}

 Let 
 \begin{align*}
 f_{t}\l(x\r)=\begin{cases}
 \sech\l(x-\sqrt{t}-\sech\l(\sqrt{t}\r)\r) &\text{ if } \sech\l(\sqrt{t}\r)\leq x\leq 2\sqrt{t}+\sech\l(\sqrt{t}\r).\\
 x &\text{ if } 0 \leq x \leq \sech(\sqrt{t})\\
  2\sqrt{t}+2\sech(\sqrt{t})-x &\text{ if } 2\sqrt{t}+\sech(\sqrt{t}) \leq x \leq 2\sqrt{t}+2\sech(\sqrt{t}).
 \end{cases}
 \end{align*}
 In words, for $0 \leq x\leq \sech\l(\sqrt{t}\r)$ and $2\sqrt{t}+\sech\l(\sqrt{t}\r)\leq x\leq 2\sqrt{t}+2\sech\l(\sqrt{t}\r)$, we use linear interpolation (of slope $1$) to get a continuous function with compact support. A similar function which is maximizer of a certain expression shows up in the calculations of lower bounds for right tails of $TW_{\beta}$ in \cite{RRV}. The linear part ensures that $\sqrt{n}v_1^2(1),\sqrt{n}v_1^2(p)$ terms (and the corresponding term for vectors $v_i,i\neq 2$) in \eqref{eq: mean lower bound for right tail lower bound} do not blow up. Denote $\tau_1=\l\lfloor\frac{\sech\l(\sqrt{t}\r)n^{1/3}}{\sqrt{t}}\r\rfloor$ and $\tau_2=\left\lceil\frac{\l(2\sqrt{t}+\sech\l(\sqrt{t}\r)\r)n^{1/3}}{\sqrt{t}}\right\rceil$ and $g(t)=2\sqrt{t}+2\sech\l(\sqrt{t}\r)$. Following \cite{RRV}, we define $v_1\l(j\r)=f_{t}\l(\frac{j\sqrt{t}}{n^{1/3}}\r)$ for $j\leq p$, and $v_1\l(j\r)=0$ otherwise.
By the definition of $v_1$ we have,
\begin{align}\label{eq: l2 norm of vector }
\sum\limits_{j=1}^{n}v_1^2\l(j\r)&=
\sum\limits_{j=1}^{\tau_1}\frac{j^2t}{n^{2/3}} +\sum\limits_{j=\tau_2}^{p}\l(g(t)-\frac{j\sqrt{t}}{n^{1/3}}\r)^2 +\sum\limits_{j= \tau_1+1}^{\tau_2-1}\sech^2\l(\frac{j\sqrt{t}}{n^{1/3}}-\sqrt{t}-\sech\l(\sqrt{t}\r)\r).
 \end{align}

Our goal is to  approximate the infinite sums involved in the expressions of $\mu$ and $\sigma$ with infinite integrals by using Lemma \ref{lem:aprox_inf_int}. The main difference here from the approach in \cite{BBBK25} is that we need to do it for all the $k$ vectors. But as the vectors are shifts of $v_1$, we analyse mostly $v_1$. The analysis of the terms dependent on $i$ are explicitly highlighted.

To this end we first observe that as $\sech(x)$ decays exponentially, for any $\varepsilon>0$, one can choose $n_{\varepsilon}, t_{\varepsilon},\gamma(\vep)$ such that for all $n\geq n_{\varepsilon}$ and $t_{\varepsilon}\leq t\leq \gamma(\vep)n^{\frac{2}{3}}$, we have that $\sum\limits_{j=1}^{\tau_1}\frac{j^2t}{n^{2/3}} +\sum\limits_{j=\tau_2}^{p}\l(g(t)-\frac{j\sqrt{t}}{n^{1/3}}\r)^2\leq \varepsilon \frac{n^{1/3}}{\sqrt{t}}$.
For any $\varepsilon>0$, one can choose $M_{\varepsilon}>0$ such that 
\begin{align}\label{eq: sech tail integrals}
    \int_{-\infty}^{-M_{\varepsilon}}\sech^2(x)dx,\ \int_{M_{\varepsilon}}^{\infty}\sech^2(x)dx\leq \varepsilon.
\end{align}

 This allows us to write, for large enough $t$ (in particular, $\sqrt{t}>M_\vep$) and $n$, we have 
 \begin{align*}
     \sum\limits_{j\geq 0}^{\infty}\sech^2\l(\frac{-j\sqrt{t}}{n^{1/3}}-M_{\varepsilon}\r)+\sech^2\l(\frac{j\sqrt{t}}{n^{1/3}}+M_{\varepsilon}\r)\leq \varepsilon \frac{n^{1/3}}{\sqrt{t}}.
 \end{align*} 
 Here we have used Lemma \ref{lem:aprox_inf_int} and \eqref{eq: sech tail integrals}. Hence we also have
 \begin{align*}
 \sum\limits_{j\geq 0}^{\infty}\sech^2\l(\frac{-j\sqrt{t}}{n^{1/3}}-\sqrt{t}\r)+\sech^2\l(\frac{j\sqrt{t}}{n^{1/3}}+\sqrt{t}\r)\leq \varepsilon \frac{n^{1/3}}{\sqrt{t}}.    
 \end{align*}
  This implies by applying Lemma \ref{lem:aprox_inf_int} again that, for large enough $n$ and $t$, 
 \begin{align}\label{eq: l2 norm calc1}
     \l|\int_{-\infty}^{\infty}\sech^2\l(x\r)dx-\frac{\sqrt{t}}{n^{1/3}}\sum\limits_{j=\tau_1+1}^{\tau_2-1}\sech^2\l(\frac{j\sqrt{t}}{n^{1/3}}-\sqrt{t}-\sech\l(\sqrt{t}\r)\r)\r|\leq \varepsilon.
 \end{align}
 By repeating the exact same argument for $\sum\limits_{j=1}^{n}v_1^4\l(j\r)$, we have
 \begin{align}\label{eq: right tail L2,L4 norm}
\l|\sum\limits_{j=1}^{n}v_1^2\l(j\r)-2\frac{n^{1/3}}{\sqrt{t}}\r|\leq \frac{n^{1/3}}{\sqrt{t}}\varepsilon\quad \quad \l|\sum\limits_{j=1}^{n}v_1^4\l(j\r)-\frac{n^{1/3}}{\sqrt{t}}\l(\frac{4}{3}\r)\r|\leq \frac{n^{1/3}}{\sqrt{t}}\varepsilon .
 \end{align}

Note that above bounds hold for any $v_i$, due to the definition of $v_i$ in terms of $v_1$.

Choosing $\sqrt{t}>M_{\varepsilon}$ and  by choosing $t,n$ large enough, by similar argument using Lemma \ref{lem:aprox_inf_int} it can be checked that
\begin{align*}
    \l|\sum\limits_{j=\tau_1+1}^{\tau_2-2}\l(\Delta\l(v_1\l(j\r)\r)\r)^2-\frac{\sqrt{t}}{n^{1/3}}\int_{-\infty}^{\infty}\l(\sech'\l(x\r)\r)^2dx\r|\leq \varepsilon\frac{\sqrt{t}}{n^{1/3}}.
\end{align*}
Hence we have

    \begin{equation}
    \label{eq: right tail derivative norm}
\l|\sum\limits_{j=0}^{n}\l(\Delta\l(v_1\l(j\r)\r)\r)^2-\frac{\sqrt{t}}{n^{1/3}}\l(\frac{2}{3}\r)\r|\leq\frac{\sqrt{t}}{n^{1/3}}\varepsilon.
    \end{equation}
Note that above bounds hold for any $v_i$, due to the definition of $v_i$ in terms of $v_1$.

Next consider $\sum\limits_{j=1}^{p}jv_1^2\l(j\r)$.  Elementary calculations (see \cite{BBBK25}[Equations $(3.9)$-$(3.10)$] for more details) give us, 

{
   
    \begin{align}\label{eq: kv(k)^2 final bound}
        \l|\sum\limits_{j=1}^{p}jv_1^2\l(j\r)-\frac{n^{2/3}}{\sqrt{t}}2\r|&\leq \l|\sum\limits_{j=\tau_1+1}^{\tau_2-1}j\sech^2\l(\frac{j\sqrt{t}}{n^{1/3}}-\sqrt{t}-\sech\l(\sqrt{t}\r)\r)-\frac{n^{2/3}}{\sqrt{t}}2\r|+\frac{n^{2/3}}{\sqrt{t}}2\varepsilon\nonumber\\
        &\leq \frac{n^{2/3}}{\sqrt{t}}3\varepsilon
    \end{align}
}

and also
    \begin{equation}
    \label{eq: right tail derivative square norm}
\l|\sum\limits_{j=0}^{p}\l(\Delta\l(v_1^2\l(j\r)\r)\r)^2-\frac{\sqrt{t}}{n^{1/3}}\int_{-\infty}^{\infty}\l(2FF'\l(x\r)\r)^2dx\r|\leq\frac{\sqrt{t}}{n^{1/3}}\varepsilon,
    \end{equation}
    where $F(x)=\sech(x).$

We observe that the bound \eqref{eq: right tail derivative square norm} (used in bounding the variance term as computed in \eqref{eq: variance eq hermite}) gives,
\begin{equation}
    \label{eq: right tail derivative square norm v_i}
\l|\sum\limits_{j=(i-1)(p+2)}^{i(p+2)}\l(\Delta\l(v_i^2\l(j\r)\r)\r)^2-\frac{\sqrt{t}}{n^{1/3}}\int_{-\infty}^{\infty}\l(2FF'\l(x\r)\r)^2dx\r|\leq\frac{\sqrt{t}}{n^{1/3}}\varepsilon.
    \end{equation}

As remarked before, in \eqref{eq: mean lower bound for right tail lower bound} only the last term, that is $\frac{1}{\sqrt{n}}\sum\limits_{j=1}^{p_k} \l(j+1\r) v_i^2\l(j\r) $, varies with $i$.

Note that
\begin{align*}
    \sum\limits_{j=1}^{p_k} \l(j+1\r) v_i^2\l(j\r)&= \sum\limits_{j=1}^{p} \l(j+1+(i-1)(p+2)\r) v_1^2\l(j\r)\\
    &=\sum\limits_{j=1}^{p} (j+1)v_1^2\l(j\r)+(i-1)(p+2) \sum\limits_{j=1}^{p}v_1^2\l(j\r).
\end{align*}

It follows from \eqref{eq: kv(k)^2 final bound}, \eqref{eq: right tail L2,L4 norm} and the value of $p$ that,
\begin{align}\label{eq: kv(k)^2 final bound v_i}
    \l|\sum\limits_{j=1}^{p_k}(j+1)v_i^2\l(j\r)-\frac{n^{2/3}}{\sqrt{t}}(2+4(i-1))\r|\leq \frac{n^{2/3}}{\sqrt{t}}5\varepsilon.
\end{align}

Note that from \eqref{eq: Gaussian for right tail lower bound} we need a lower bound for the probability of the event 
\begin{equation}
\label{eq: Gaussian event for right tail lower bound}
    Z_i \geq \frac{\sqrt{n}\sum\limits_{j=0}^{p_k}\l(\Delta\l(v_i\l(j\r)\r)\r)^2+\frac{1}{\sqrt{n}}\sum\limits_{j=1}^{p_k} \l(j+1\r) v_i^2\l(j\r)+tn^{-1/6}\<v_i,v_i\>}{\sqrt{\frac{4}{\beta}\sum\limits_{j=1}^{p_k}v_i^4\l(j\r)-\frac{1}{\beta}\sum\limits_{j=0}^{p_k}\l(v_i^2\l(j+1\r)-v_i^2\l(j\r)\r)^2}}.
\end{equation}

Using the bounds from \eqref{eq: right tail L2,L4 norm},\eqref{eq: right tail derivative norm} and \eqref{eq: kv(k)^2 final bound v_i} we consider,
\begin{align*}
    Z_i\geq \frac{\sqrt{n}\frac{\sqrt{t}}{n^{1/3}}\l(\frac{2}{3}+\varepsilon\r)+\frac{1}{\sqrt{n}}\l(\frac{n^{2/3}}{\sqrt{t}}\l(2+4(i-1)+\varepsilon\r)+\frac{n^{1/3}}{\sqrt{t}}\l(2+\varepsilon\r)\r)+\frac{t}{n^{1/6}}\frac{n^{1/3}}{\sqrt{t}}\l(2+\varepsilon\r)}{\sqrt{\frac{4n^{1/3}}{\beta\sqrt{t}}\l(\frac{4}{3}-\varepsilon\r)-\frac{1\sqrt{t}}{\beta n^{1/3}}\l(c+\varepsilon\r)}},
\end{align*}
where $c= \int_{-\infty}^{\infty}\l(2FF'\l(x\r)\r)^2dx$ and $F(x)=\sech(x).$ Note that we have taken a slightly larger value for the numerator and a lower value for the denominator as we are interested in the lower bound.

We highlight a crucial point on which the whole proof is based. The second term in the numerator, the term that corresponds to the term depending on $i$ in \eqref{eq: i dep term}, is of lower order compared to the other terms in the numerator. For this reason, the dependence on $i$ and also the shift of the vectors do not affect the argument.

Hence we find that for any $\vep>0,$ there exists $n_{\varepsilon,\beta,k},t_{\varepsilon,\beta,k}$ such that for all $n\geq n_{\varepsilon,\beta,k}$ and $t\geq t_{\varepsilon,\beta,k}$, the probability of the event that we want is lower bounded by the probability of the following event
\[
Z_i \geq \frac{2\sqrt{\frac \beta3}t^{3/4}\l(1+\vep\r)}{\l(1-\vep\r)},
\]
where $Z_i$ is a $N\l(0,1\r)$ variable. Hence, for any $\vep>0$, there exist $n_{\varepsilon,\beta,k}$ and $t_{\varepsilon,\beta,k}$ such that for all $n\geq n_{\varepsilon,\beta}$ and $t_{\varepsilon,\beta}\leq t$, we have
\begin{align*}
    \< \wih{H}_{n,\beta,p_k}v_i,v_i\>\geq\l(2\sqrt{n}+tn^{-1/6}\r) \<v_i,v_i\>
\end{align*}
 with probability at least $\exp\l(-\l(\frac{2\beta}{3}+\varepsilon\r)t^{3/2}\r)$. Due to the independence of $\< \wih{H}_{n,\beta,p_k}v_i,v_i\>$ and \eqref{eq: lower bound on lambda_k}, we have that 
\begin{align*}
\mathbb{P}\l(\lambda_{k}\l(\wih{H}_{n,\beta,p_k}\r)\geq 2\sqrt{n}+tn^{-1/6}\r)\geq \exp\l(- \frac{2\beta k}{3}t^{3/2}(1+\varepsilon)\r).
\end{align*}

Further, note that by Weyl's inequality we have
\[
\l\{\lambda_{k} \l(\wih{H}_{n,\beta,p_k} \r) \geq 2 \sqrt{n}+t n^{-1/6}\r\} \subset \l\{ \lambda_{k} \l(H_{n, \beta}\r) \geq 2 \sqrt{n}+t n^{-1/6}-{\lambda_{\max} \l(\wih{H}_{n,\beta,p_k}-H_{n, \beta} \r)}\r\}.
\]
We take $x=\varepsilon t n^{1/3}$ and apply Proposition \ref{eq: estimate of off diagonals of the difference}, to get for sufficiently large $n$
\[
\P \l( \lambda_{k} \l(H_{n, \beta}\r) \geq 2 \sqrt{n}+ \l(1-\vep\r) tn^{-1/6}\r)+C n\l(3e^{-x}+2e^{-\frac{\beta\l(n-p_k\r)}{2}}\r)\geq \P \l( \lambda_k\l(\wih{H}_{n,\beta,p_k}\r)  \geq 2 \sqrt{n}+t n^{-1/6} \r).
\]
By elementary calculations one can show that for any $\beta>0$ and any small $\varepsilon>0$ and $k\in\N$, there exist $n_{\varepsilon,\beta,k},\gamma_{\vep,\beta,k}$ and $t_{\varepsilon,\beta,k}$ such that for $n\geq n_{\varepsilon,\beta,k}$ and $t_{\varepsilon,\beta,k}\leq t\leq \gamma_{\vep,\beta,k}n^{2/3}$, we have that
\begin{align*}
    \mathbb{P}\l(\lambda_{k}\l(H_{n,\beta}\r)\geq 2\sqrt{n}+tn^{-1/6}\r)\geq \exp\l(- \frac{2\beta k}{3}t^{3/2}\l(1+\varepsilon\r)\r).
\end{align*}

\subsection{Laguerre Ensemble}\label{subsec: Laguerre lower for left}

Fix $\beta>0$ and fix small $\varepsilon>0$ and $k\in\N$.
Let $m=m\l(n\r)\geq n$ be such that for all $n\geq n_{\varepsilon,\beta,M,k}$ (to be made precise later), we have $\frac{m}{n}\leq M$ for some $M<\infty$. We denote $\gamma:=m/n$ and $a=\l(1+\sqrt{\gamma}\r)^2$ and $b=\gamma^{-1/6}\l(1+\sqrt{\gamma}\r)^{4/3}.$ We consider the matrix $\wih{L}_{n,m,\beta,k(p+4)}$ where $p:= \left\lceil \l(2\sqrt{t}+2\sech\l(\sqrt{t}\r)\r)\frac{n^{1/3}\gamma^{1/4}}{\sqrt{tb}}\right\rceil$ (recall the definition of $\l(\wih{L}_{n,m,\beta,p}\r)$ from Proposition \ref{eq: estimate of off diagonals of the difference}). Denote $p_k=k(p+4)$. We obtain a lower bound for the probability of the event $\lambda_{k}\l(\wih{L}_{n,m,\beta,p_k}\r)\geq an\l(1+\frac{tbn^{-2/3}}{a}\r)$. Here $t_{\varepsilon,\beta,M,k}\leq t\leq \rho_{\vep,\beta,M,k}n^{2/3}$. The values $t_{\varepsilon,\beta,M,k},\rho_{\vep,\beta,M,k}$ will be made precise later. As in the case of Hermite ensemble, for $\varepsilon>0$ we construct $k$ non-negative vectors $v_1,v_2,\dots,v_k$ (these vectors depend on $n,m,t,k$ and $\varepsilon$). We first define $v_1$ such that $v_1\l(i\r)=0$ for all $i> p$ and $v_1\l(0\r)=0$. The non-zero entries are described later. For $i\in [k-1]$, define $v_{i+1}$ as $v_{i+1}(j+i(p+2))=v_{i}(j+(i-1)(p+2))$, if $j\in\{1,2,\dots,p\}$, otherwise the entries are $0$. In other words, we shift the entries and form new vectors from $v_1$. As defined, in each vector $v_i\in\R^{n+1}$, only $p$ many entries are non-zero. Note that as the matrix $\wih{L}_{n,m,\beta,p_k}$ is a tridiagonal matrix and the supports of $v_i$s are far enough (distance 3 apart), we have $\< \wih{L}_{n,m,\beta,p_k}v_i,v_j\>=0$ for $i\neq j$. As a result, 
\begin{align}\label{eq: lower bound on lambda_k laguerre}
\lambda_{k}\l(\wih{L}_{n,m,\beta,p_k}\r)\geq \min\limits_{i\in[k]}\frac{\< \wih{L}_{n,m,\beta,p_k}v_i,v_i\>}{\langle v_i,v_i\rangle}.
\end{align}

We define $v_i$ and show that for all $i\in[k]$ and for all $n\geq n_{\varepsilon,\beta,M,k}$ and  $t_{\varepsilon,\beta,M,k}\leq t\leq \rho_{\vep,\beta,M,k}n^{2/3}$, we have
\begin{align}\label{eq: Laguerre before KMT}
    \langle \wih{L}_{n,m,\beta,p_k}v_i,v_i\rangle\geq an\l(1+\frac{tbn^{-2/3}}{a}\r)\langle v_i,v_i\rangle
\end{align}
with probability at least $\exp\l(- \frac{2\beta}{3}t^{3/2}\l(1+\varepsilon\r)\r)$. Due to the independence of $\langle \wih{L}_{n,m,\beta,p_k}v_i,v_i\rangle$, we get that
\begin{align}\label{eq: }
\lambda_{k}\l(\wih{L}_{n,m,\beta,p_k}\r)\geq an\l(1+\frac{tbn^{-2/3}}{a}\r),
\end{align}
with probability at least $\exp\l(- \frac{2\beta k}{3}t^{3/2}\l(1+\varepsilon\r)\r)$.
Note that $\wih{L}_{n,m,\beta,p_k}$ can be written as a sum of two matrices $L'$ and $L''$, where $L''$ is a tridiagonal symmetric matrix which contains entries which are linear combinations of $\zeta_k^2,\l(\zeta'_k\r)^2, \zeta_{k+1}\zeta'_k$ in the top $p_k\times p_k$ submatrix and zeros elsewhere. $L'$ contains random variables which are Gaussian in the top $p_k\times p_k$ submatrix. We first show that for the range $t_{\varepsilon,\beta,M,k}\leq t\leq \rho_{\vep,\beta,M,k}n^{{2/3}}$ and for $i\in[k]$,
\begin{align*}
    \langle L'v_i,v_i\rangle\geq an\l(1+\frac{tbn^{-2/3}}{a}\r)\langle v_i,v_i\rangle
\end{align*}
with probability at least $\exp\l(- \frac{2\beta}{3}t^{3/2}\l(1+\varepsilon\r)\r)$. We take $v_1\l(j\r)=0$ for $j> p$ and define $v_i$ as described earlier. We also use $\sqrt{n-j}\geq \sqrt{n}-\frac{j+1}{2\sqrt{n}}$ for $j\leq 2\sqrt{n}-1$. We choose $n_{\vep}$ such that $p_k\leq 2\sqrt{n}-1$ for all $n\geq n_{\vep}$.
Now $\langle L'v_i,v_i\rangle$ is a Gaussian random variable with mean $\mu_i$ and variance $\sigma_i^2$ given by,
\begin{align*}
    \mu_i&= \sum\limits_{j=1}^{p_k} \l(\l(m+1-j\r)+\l(n-j\r)\r)v_i^2\l(j\r) + 2\sum\limits_{j=1}^{p_k-1} \l(\sqrt{\l(m-j\r)\l(n-j\r)}\r)v_i\l(j\r)v_i\l(j+1\r)\nonumber\\
    &\geq \sum\limits_{j=1}^{p_k} \l(m+n-2j\r)v_i^2\l(j\r) +\sum\limits_{j=1}^{p_k-1} \l(\sqrt{m}-\frac{j{+1}}{2\sqrt{m}}\r)\l(\sqrt{n}-\frac{j+{1}}{2\sqrt{n}}\r)\l(v_i^2\l(j\r)+v_i^2\l(j+1\r)-\l(\Delta\l(v_i\l(j\r)\r)\r)^2 \r)\nonumber\\
    &\geq \l(\sqrt{m}+\sqrt{n}\r)^2\langle v_i,v_i\rangle-\sum\limits_{j=1}^{p_k} \l(2j+\frac{\l({m}+{n}\r)\l(j+1\r)}{\sqrt{mn}}\r)v_i^2\l(j\r)-\sum\limits_{j=0}^{p_k}\l(\sqrt{mn}+\frac{\l(j+1\r)^2}{4\sqrt{mn}}\r)\l(\Delta\l(v_i\l(j\r)\r)\r)^2. 
\end{align*}

Note that in the above inequality, due to the way the vectors $v_i$ are defined, only the first term on the right hand side does not depend on $i$.

\begin{align*}
    \sigma_i^2&=  \frac{2}{\beta}\sum\limits_{j=0}^{p_k-1}\l({\sqrt{m-j}\ v_i^2\l(j+1\r)}+\sqrt{n-j}\ v_i\l(j\r)v_i\l(j+1\r)\r)^2\\  
    &+\frac{2}{\beta}\sum\limits_{j=1}^{p_k} \l({\sqrt{n-j}\ v_i^2\l(j\r)}+\sqrt{m-j}\ v_i\l(j\r)v_i\l(j+1\r)\r)^2. 
\end{align*}
We are interested in the event 
\begin{align*}
    \mu_i+\sigma_i Z_i \geq \l(\sqrt{m}+\sqrt{n}\r)^2\langle v_i,v_i\rangle + tbn^{1/3}\sum\limits_{j=1}^{p_k} v_i^2\l(j\r).
\end{align*}
{Note that $\mu_i\leq \l(\sqrt{m}+\sqrt{n}\r)^2\langle v_i,v_i\rangle$.} As in the case of Hermite ensemble, we can take the lower bound of $\mu_i$ to bound the probability of the above event. It suffices to find a lower bound for the probability of the event,
\begin{equation}\label{eq: Laguerre event lower bound}
    \sigma_i Z_i\geq  tbn^{1/3}\sum\limits_{j=1}^{p_k} v_i^2\l(j\r) +\sum\limits_{j=1}^{p_k} \l(2j+\frac{\l({m}+{n}\r)\l(j+1\r)}{\sqrt{mn}}\r)v_i^2\l(j\r)+\sum\limits_{j=0}^{p_k}\l(\sqrt{mn}+\frac{\l(j+1\r)^2}{4\sqrt{mn}}\r)\l(\Delta\l(v_i\l(j\r)\r)\r)^2.
\end{equation}

We also have that 
\begin{align*}
    \sigma_i^2
    &\geq \l(\frac{2}{\beta}\l(m+n-2p_k\r)\r)\l(\sum\limits_{j=1}^{p_k}2v_i^4\l(j\r)-\frac{1}{2}\sum\limits_{j=0}^{p_k}\left(\Delta\l(v_i^2\l(j\r)\r)\right)^2\r)\nonumber\\
    &\quad +\frac{2}{\beta}\sqrt{\l(m-p_k\r)\l(n-p_k\r)}\l(\sum\limits_{j=1}^{p_k}4v_i^4\l(j\r)-\sum\limits_{j=0}^{p_k}\l(\Delta\l(v_i^2\l(j\r)\r)\r)^2-\sum\limits_{j=0}^{p_k}\l(\Delta\l(v_i\l(j\r)\r)\r)^2\l(v_i^2\l(j\r)+v_i^2\l(j+1\r)\r)\r)\\
    &= \frac{4}{\beta}\l(\sqrt{m-p_k}+\sqrt{n-p_k}\r)^2\l(\sum\limits_{j=1}^{p_k}v_i^4\l(j\r)-\frac{1}{4}\sum\limits_{j=0}^{p_k}\l(\Delta\l(v_i^2\l(j\r)\r)\r)^2\r)\nonumber\\
    &-\frac{2}{\beta}\sqrt{\l(m-p_k\r)\l(n-p_k\r)}\sum\limits_{j=0}^{p_k}\l(\Delta\l(v_i\l(j\r)\r)\r)^2\l(v_i^2\l(j\r)+v_i^2\l(j+1\r)\r)\nonumber.
\end{align*}

Note that in the above inequality, due to the way the vectors $v_i$ are defined, none of the terms on the right hand side depend on $i$.

We denote $\nu$ to be the RHS of the above inequality. Let $f_{t}\l(x\r)$ be the function as in the Hermite case. We define $v_1\l(j\r)=f_{t}\l(\frac{j\sqrt{tb}}{\gamma^{1/4}n^{1/3}}\r)$ for $j\leq p$ and $0$ otherwise. Using this vector, we define $v_i$ for $i\geq 2$. By the choice of vector $v_1$ and comparing sums with integrals as we did in the Hermite case, one can check that for any given $\varepsilon>0$, one can choose $n_{\varepsilon}, \rho(\vep,M)$ and $t_{\varepsilon}$ such that for all $t_{\varepsilon}\leq t \leq \rho(\vep,M)n^{2/3}$ and for all $n\geq n_{\varepsilon}$ with $\frac{m}{n}\leq M$ for some $M<\infty$, the following bounds hold. Recall that $F(x)=\sech(x).$

\begin{align*}
\l|\sum\limits_{j=1}^{n}v_1^2\l(j\r)-2\frac{n^{1/3}\gamma^{1/4}}{\sqrt{t}\sqrt{b}}\r|\leq \frac{n^{1/3}\gamma^{1/4}}{\sqrt{t}\sqrt{b}}\varepsilon\quad \quad \l|\sum\limits_{j=1}^{n}v_1^4\l(j\r)-\frac{n^{1/3}\gamma^{1/4}}{\sqrt{t}\sqrt{b}}\l(\frac{4}{3}\r)\r|\leq \frac{n^{1/3}\gamma^{1/4}}{\sqrt{t}\sqrt{b}}\varepsilon .
 \end{align*}

 \begin{align*}
\l|\sum\limits_{j=0}^{p_k}\l(\Delta\l(v_1\l(j\r)\r)\r)^2-\frac{\sqrt{t}\sqrt{b}}{n^{1/3}\gamma^{1/4}}\l(\frac{2}{3}\r)\r|\leq\frac{\sqrt{t}\sqrt{b}}{n^{1/3}\gamma^{1/4}}\varepsilon.
    \end{align*}

 \begin{align*}
\l|\sum\limits_{j=0}^{p_k}\l(\Delta\l(v_1^2\l(j\r)\r)\r)^2-\frac{\sqrt{t}\sqrt{b}}{n^{1/3}\gamma^{1/4}}\int_{-\infty}^{\infty}\l(2FF'\l(x\r)\r)^2dx\r|\leq\frac{\sqrt{t}\sqrt{b}}{n^{1/3}\gamma^{1/4}}\varepsilon.
    \end{align*}

    \begin{align*}
\l|\sum\limits_{j=0}^{p_k}\l(\Delta\l(v_1\l(j\r)\r)\r)^2\l(v_1^2\l(j\r)+v_1^2\l(j+1\r)\r)-\frac{\sqrt{t}\sqrt{b}}{n^{1/3}\gamma^{1/4}}\int_{-\infty}^{\infty}2\l(FF'\l(x\r)\r)^2dx\r|\leq\frac{\sqrt{t}\sqrt{b}}{n^{1/3}\gamma^{1/4}}\varepsilon.
    \end{align*}
Note that, due to the way $v_i$ are defined, all of the above bounds also hold for $v_i$. We also have,
\begin{align*}
\l|\sum\limits_{j=1}^{p_k}jv_1^2\l(j\r)-\frac{n^{2/3}\sqrt{\gamma}}{\sqrt{t}b}2\r|\leq \frac{n^{2/3}\sqrt{\gamma}}{\sqrt{t}b}\varepsilon.
\end{align*}
As a result we get,
 \begin{align*}
\l|\sum\limits_{j=1}^{p_k}jv_i^2\l(j\r)-\frac{n^{2/3}\sqrt{\gamma}}{\sqrt{t}b}(2+4(i-1))\r|\leq \frac{n^{2/3}\sqrt{\gamma}}{\sqrt{t}b}\varepsilon.
\end{align*}

We use the above bounds to get lower bound for the probability of the event 
\begin{align*}
    Z_i&\geq \frac{tbn^{1/3}\sum\limits_{j=1}^{p_k} v_i^2\l(j\r) +\sum\limits_{j=1}^{p_k} \l(2j+\frac{\l({m}+{n}\r)\l(j+1\r)}{\sqrt{mn}}\r)v_i^2\l(j\r)+\sum\limits_{j=0}^{p_k}\l(\sqrt{mn}+\frac{\l(j+1\r)^2}{4\sqrt{mn}}\r)\l(\Delta\l(v_i\l(j\r)\r)\r)^2}{\sqrt{\nu}}.
    \end{align*}

    Denoting 
    \begin{align*}
        \nu_{\ell}:= {\frac{4}{\beta}\l(\sqrt{m-p_k}+\sqrt{n-p_k}\r)^2\l(\frac{n^{1/3}\gamma^{1/4}}{\sqrt{t}\sqrt{b}}\l(\frac{4}{3}-\varepsilon\r)-\frac{1}{4}\frac{\sqrt{t}\sqrt{b}}{n^{1/3}\gamma^{1/4}}\l(\int_{-\infty}^{\infty}6\l(FF'\l(x\r)\r)^2dx+\varepsilon\r)\r)},
    \end{align*}
    one can check that $\nu\geq \nu_{\ell}$. As a result, we have that it is enough to find lower bound for the probability of the following event.
    \begin{align*}
    Z_i &\geq \frac{\l(tbn^{1/3}+\frac{\l(m+n\r)}{\sqrt{mn}}\r)\l(2+\varepsilon\r)\frac{n^{1/3}\gamma^{1/4}}{\sqrt{t}\sqrt{b}}+\l(2+\frac{\l({m}+{n}\r)}{\sqrt{mn}}\r)\frac{n^{2/3}\sqrt{\gamma}}{\sqrt{t}b}\l(2+4(i-1)+\varepsilon\r)}{\sqrt{\nu_{\ell}}}\\
    &+\frac{\l(\sqrt{mn}+\frac{\l(p_k+1\r)^2}{4\sqrt{mn}}\r)\frac{\sqrt{t}\sqrt{b}}{n^{1/3}\gamma^{1/4}}\l(\frac{2}{3}+\varepsilon\r)}{\sqrt{\nu_{\ell}}}.
\end{align*}

As in the Hermite case, the contribution due to the 
\begin{align*}
    \sum\limits_{j=1}^{p_k} \l(2j+\frac{\l({m}+{n}\r)\l(j+1\r)}{\sqrt{mn}}\r)v_i^2\l(j\r)+\sum\limits_{j=0}^{p_k}\l(\frac{\l(j+1\r)^2}{4\sqrt{mn}}\r)\l(\Delta\l(v_i\l(j\r)\r)\r)^2
\end{align*}
terms, which are the only terms that depend on $i$, are negligible compared to other terms in the numerator.

Hence, for any $\vep>0$ and for sufficiently large $t$ and $n$ our desired lower bound is the lower bound of probability of the event,
\[
Z_i \geq \frac{\gamma^{1/4}\sqrt{tb}n^{2/3}\frac 83\l(1+\vep\r)}{\frac{2}{\sqrt{\beta}}\l(\sqrt{m}+\sqrt{n}\r)\frac{\gamma^{1/8}n^{1/6}}{\l(tb\r)^{1/4}}\frac{2}{\sqrt{3}}\l(1-\vep\r)}.
\]
By putting the value of $b$ and $\gamma$ one gets the above event is equal to
\[
Z_i \geq \frac{2\sqrt{\beta}t^{3/4}}{\sqrt{3}}\frac{\l(1+\vep\r)}{\l(1-\vep\r)}.
\]
Applying lower bound for Gaussian tail we get that for any $\beta>0$ and small $\varepsilon>0$ and $M>0,k\in \N$, there exist $n_{\varepsilon,\beta,M,k},t_{\varepsilon,\beta,M,k}$ such that for the range $t_{\varepsilon,\beta,M,k}\leq t\leq \rho_{\vep,\beta,M,k}n^{2/3}$ and $n\geq n_{\varepsilon,\beta,M,k}$ with $m$ such that $\frac{m}{n}\leq M$, we have
\[\langle L'v_i,v_i\rangle\geq an\l(1+\frac{tbn^{-2/3}}{a}\r)\langle v_i,v_i\rangle\]
with probability at least $\exp\l(- \frac{2\beta}{3}t^{3/2}\l(1+\varepsilon\r)\r)$.

 Using Gershgorin circle theorem (Theorem $6.1.1$ of \cite{HJ}) and using the tail behaviour of normal random variable we get that the eigenvalues of the matrix $L''$ (introduced after \eqref{eq: Laguerre before KMT}) are located in an interval $\l(-2\varepsilon tn^{1/3},2\vep tn^{1/3}\r)$ with probability at least $1-6p_k\exp\l(-\varepsilon tn^{1/3}/2\r)$. Let us denote this event by $\ce_1.$ By Weyl's inequality and Gershgorin's circle theorem we have 
 \begin{align*}
\lambda_{k}\l(\wih{L}_{n,m,\beta,p}\r)\geq an\l(1+\frac{tbn^{-2/3}}{a}\r)-2\vep tn^{1/3}
 \end{align*}
with probability at least $\exp\l(- \frac{2\beta k}{3}t^{3/2}\l(1+\varepsilon\r)\r)-6p_k\exp\l(-\vep tn^{1/3}/2\r)$.

Further, note that by Weyl's inequality we have
\begin{align*}
&\l\{\lambda_{k} \l(\wih{L}_{n,m,\beta,p} \r) \geq an+{tbn^{1/3}}-2\vep tn^{1/3}\r\}\\
&\subset \l\{ \lambda_{k} \l(L_{n,m,\beta}\r) \geq an+{tbn^{1/3}}-2\vep tn^{1/3}-{\lambda_{\max} \l(\wih{L}_{n,m,\beta,p}-L_{n,m,\beta} \r)}\r\}.
\end{align*}
By taking $x=\vep t n^{1/3}$ and applying Proposition \ref{eq: estimate of off diagonals of the difference} we have that

\begin{align*}
&\mathbb{P}\l( \lambda_{k}\l(L_{n,m,\beta}\r)\geq an+t b n^{1/3}-\varepsilon t b n^{1/3}-2\vep tn^{1/3}\r)\\
&\geq \exp\l(- \frac{2\beta k}{3}t^{3/2}\l(1+\varepsilon\r)\r)-6p_k\exp\l(-\vep tn^{1/3}/2\r)-6p_k\exp\l(-\vep tn^{1/3}/2\r)-8p_ke^{-x}-4p_ke^{-\frac{\beta\l(n-p_k\r)}{2}}.
\end{align*}

Hence we have that for any $\beta>0$ and any small $\varepsilon>0$ and $M>0,k\in\N$, there exist $n_{\varepsilon,\beta,M,k}, \rho_{\vep,\beta,M,k}$ and $t_{\varepsilon,\beta,M,k}$ such that for $n\geq n_{\varepsilon,\beta,M,k}$ and $t_{\varepsilon,\beta,M,k}\leq t\leq \rho_{\vep,\beta,M,k}n^{2/3}$ with $m$ such that $\frac{m}{n}\leq M$, we have
\begin{align}\label{eq: Laguerre right tail lower bound final form}
\mathbb{P}\l( \lambda_{k}\l(L_{n,m,\beta}\r)\geq an+t b n^{1/3}\r)
\geq \exp\l(- \frac{2\beta k}{3}t^{3/2}\l(1+\varepsilon\r)\r).
\end{align}

\section{Upper bound for right tails}\label{sec:rtub}
\subsection{Hermite ensemble.} We first obtain sharp upper bound for the sum of top $k$ eigenvalues in $\beta=2$ case. To this end, we use the distributional identity between the eigenvalues of GUE ($\beta=2$ case of Hermite ensemble) and the passage times of Brownian last passage percolation.

\textbf{Brownian last passage percolation:}
Let $B_1, B_2, \dots, B_n$ be $n$ i.i.d. standard Brownian motions starting at zero ($B_i(0) = 0$ for all $1 \le i \le n$). For $t > 0$ and integer $k \in \{1, \dots, n\}$, consider a collection of $k$ disjoint upright paths $\{\pi_1, \dots, \pi_k\}$. For each $j \in \{1, \dots, k\}$, the path $\pi_j$ starts at $(0, j)$ and ends at $(t, n - k + j)$. Each path $\pi_j$ moves sequentially across the lines $j, j+1, \dots, n - k + j$. The jump times of path $\pi_j$ are given by the vector
\[
{t}^j = \left(t_{0}^j, t_1^j, t_{2}^j, \dots, t_{n-k+1}^j\right),
\]
such that
\[
0 = t_{0}^j \leq t_1^j \leq t_{2}^j \leq \dots \leq t_{n-k}^j \leq t_{n-k+1}^j = t, \quad \forall \, j \in \{1, \dots, k\}.
\]

To ensure that the paths $\pi_1, \dots, \pi_k$ are pairwise disjoint, the times must satisfy $t_i^{j+1} < t_{i}^j$ for $i\in[n-k]$. The passage time $L(\pi_j)$ along the individual path $\pi_j$ is given by the sum of Brownian increments:
\[
L(\pi_j) = \sum_{i=j}^{n-k+j} \left( B_i(t_{i-j+1}^j) - B_i(t_{i-j}^j) \right).
\]

The {$k$-disjoint path Brownian last passage time} from $(0,1)$ to $(t, n)$ is defined as:
\[
D_k\left((0,1) \to (t, n)\right) := \sup_{\{\pi_1,\pi_2,\dots,\pi_k\} } \sum_{j=1}^{k} L(\pi_j).
\]

 The connection between Brownian LPP and GUE eigenvalues is given by the following result from  \cite[Corollary 2]{AVMW13}.
 
\begin{proposition}[Corollary 2 of \cite{AVMW13}]
    For $\beta=2$, and $k\in\N$, the sum of $k$ largest eigenvalues in \eqref{Hermite density} has the same distribution as $D_k\left((0,1) \to (t, n)\right)/\sqrt{t}.$
\end{proposition}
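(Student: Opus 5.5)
The plan is to obtain this identity from the Brownian version of the Robinson--Schensted--Knuth correspondence together with a Greene-type theorem. I would \emph{not} compute anything on the random-matrix side directly. Since the result is quoted from \cite{AVMW13}, I only outline how one would prove it.

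\textbf{Step 1 (discrete RSK and Greene's theorem).} Fix a large integer $N$. Replace each Brownian motion $B_i$ on $[0,t]$ by a random walk with $\lfloor Nt\rfloor$ steps, so that $(B_1,\dots,B_n)$ becomes an $n\times \lfloor Nt\rfloor$ array of weights. Because we need weights that make the RSK shape exactly solvable, take i.i.d.\ geometric (or exponential) weights, centred and scaled by $N^{-1/2}$. Apply RSK to this array and let $\lambda^{(N)}=(\lambda^{(N)}_1\geq\dots\geq\lambda^{(N)}_n)$ be the resulting shape. Greene's theorem states that $\lambda^{(N)}_1+\dots+\lambda^{(N)}_k$ equals the maximal total weight of $k$ disjoint up-right lattice paths in the array. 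Under the scaling, this lattice quantity is exactly the discrete analogue of $D_k((0,1)\to(t,n))$. The law of $\lambda^{(N)}$ is a Meixner (respectively Laguerre) ensemble with $n$ particles.

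\textbf{Step 2 (limits of both sides).} Centre $\lambda^{(N)}$ by the mean and divide by $\sqrt N$. The $n$-point Meixner/Laguerre density with $n$ fixed and $N\to\infty$ then converges to the density $\propto\prod e^{-\lambda_j^2/(2t)}\prod_{j<l}|\lambda_j-\lambda_l|^2$. This is \eqref{Hermite density} with $\beta=2$ after the substitution $\lambda\mapsto\lambda/\sqrt t$. On the path side, Donsker's theorem gives uniform convergence of the rescaled partial-sum processes to $(B_1,\dots,B_n)$. The map taking the $n$ processes to $D_k$ is a supremum over jump-time configurations of linear functionals. It is therefore $1$-Lipschitz, up to the factor $k(n-k+1)$, with respect to the sup norm, and hence continuous. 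This passes the discrete Greene identity to the limit, with the non-strict inequalities $t_i^{j+1}\le t_i^j$ in the limit. The supremum over this closed set equals the supremum over the strict set, because the sums of Brownian paths are continuous.

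\textbf{Alternative route.} One could instead use the continuous Brownian RSK (iterated Pitman transforms, following O'Connell--Yor and Baryshnikov/Gravner--Tracy--Widom). There the output process is Dyson Brownian motion, whose time-$t$ marginal is the eigenvalue law of $\sqrt t\,\mathrm{GUE}_n$. The identity "sum of the top $k$ output coordinates $=D_k$" is the Brownian Greene theorem of \cite{AVMW13}. Either route gives $\sum_{i\le k}\lambda_i\overset d= D_k/\sqrt t$.

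\textbf{Main obstacle.} I expect the hardest step to be the careful matching of conventions between the two sides. This covers three points: checking that the disjointness requirement on lattice paths in Greene's theorem corresponds exactly to the interlacing constraints on the jump times $t^j$ used in the definition of $D_k$ (paths from $(0,j)$ to $(t,n-k+j)$); verifying the normalisation $e^{-\lambda^2/2}$ for $\beta=2$; and justifying that the discrete-to-continuum limit of the constrained supremum commutes with the limit in distribution.
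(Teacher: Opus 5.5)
The paper gives no proof of this statement. It is quoted from \cite{AVMW13} and used as a black box in Section~\ref{sec:rtub}, so there is no internal argument to compare yours with. Your first route is a correct reconstruction of the standard derivation. Greene's theorem for RSK on an $n\times M$ array of nonnegative geometric or exponential weights identifies $\lambda_1+\dots+\lambda_k$ of the Meixner/Laguerre shape with $k$-path lattice LPP, almost surely for each $N$. With $n$ fixed and $M=\lfloor Nt\rfloor\to\infty$, the centred shape divided by $\sqrt N$ (times the standard deviation in the geometric case) converges to $\sqrt t$ times the points of \eqref{Hermite density} at $\beta=2$. On the other side, Donsker plus Lipschitz continuity of the multi-path functional gives $D_k((0,1)\to(t,n))$. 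Since the two sides are equal random variables at every $N$, their limits agree in law, and no interchange of limits is needed beyond this.

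A few points should be stated more carefully.
\begin{itemize}
\item RSK is applied to the raw nonnegative weights, not to the centred array as your Step 1 literally says. The centring passes through the maximum only because you use Greene's theorem in the fixed-endpoint form: path $j$ runs from row $j$, column $1$ to row $n-k+j$, column $M$, matching the definition of $D_k$. Every admissible family then covers exactly $k(M+n-k)$ cells, so one subtracts the same constant from both sides. On the shape side this differs from the natural centring $kM$ times the mean by $O(kn/\sqrt N)$.
\item The lattice LPP is not literally $D_k$ evaluated at the interpolated walks. The corner cell at each change of row is counted in both rows, and lattice disjointness reads $c^{j+1}_l\le c^{j}_l-1$. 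The discrepancy is at most $k(n-k)\max_{i,c}|w_{ic}|/\sqrt N$ plus the modulus of continuity of the rescaled walks over windows of length $O(k/N)$. Both vanish in probability.
\item The Lipschitz constant is $2k(n-k+1)$, since each path contributes $n-k+1$ increments.
\item Attributing the pathwise Brownian Greene formula to \cite{AVMW13} makes your alternative route circular as a proof of this statement. That formula belongs to the Brownian RSK literature (iterated Pitman transforms, O'Connell--Yor, Biane--Bougerol--O'Connell). It is typically derived from the discrete Greene theorem by the same Donsker limit, so the second route mainly gives a cleaner identification of the matrix side via Dyson Brownian motion started at the origin; it does not avoid the discretisation.
\end{itemize}
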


Note that $D_k\left((0,1) \to (t, n)\right)$ possess the super-additive structure. That is,
\begin{align*}
    D_k({(0,1)\to (t, n)})+D_k({(t,n+1)\to (2t, 2n)})\leq D_k({(0,1)\to (2t, 2n)}).
\end{align*}
The two summands in the left hand side above are i.i.d. random variables. We also have that, due to the Brownian scaling,
\begin{align*}
    D_k({(0,1)\to (1, n)})\overset{d}{=}D_k({(0,1)\to (t, n)})/\sqrt{t}.
\end{align*}
Hence we have,
\begin{align*}
    \P\l(D_k({(0,1)\to (1, n)})\geq 2k\sqrt{n}+tkn^{-1/6}\r)^m\leq \P\l(D_k({(0,1)\to (m, mn)})\geq 2mk\sqrt{n}+tkmn^{-1/6}\r).
\end{align*}
 This gives us,
 \begin{align}\label{eq: Hermite LDP}
\frac{\log\l(\P\l(D_k({(0,1)\to (1, n)})\geq 2k\sqrt{n}+tkn^{-1/6}\r)\r)}{n}&\leq \frac{\log\l(\P\l(\frac{D_k({(0,1)\to (1, mn)})}{\sqrt{mn}}\geq 2k+tkn^{-2/3}\r)\r)}{mn}\notag \\
&=\frac{\log\l(\P\l(\frac{\sum\limits_{i=1}^k\lambda_i}{\sqrt{mn}}\geq 2k+tkn^{-2/3}\r)\r)}{mn},
 \end{align}
 where $\lambda_i$ are the top $k$ eigenvalues of GUE matrix of size $mn$.
 We apply the large deviation result of \cite{GGM12} for the top $k$ eigenvalues of GUE.
\cite[Theorem 2.10]{GGM12} gives that the joint law of top $k$ eigenvalues of GUE satisfies large deviation principle in the scale $n$ with good rate function given by 
\begin{align*}
    J^{(k)}(x_1,\dots,x_k)=\sum\limits_{i=1}^kJ_V(x_i)+k\alpha_{V,\beta}^{1}\quad \text{ if } x_1\geq x_2\geq\dots\geq x_k, 
\end{align*}
 with $J_V(x)=V(x)-2\int\log|x-y|d\mu_V(y)$. Here $V(x)= x^2/2$ and $\beta=2$. Also $\alpha_{V,\beta}^{1}=-\beta/2$ and $d\mu_V(y)=\frac{1}{2\pi}\sqrt{4-y^2}dy.$
Applying the contraction principle (see \cite{DZ98}), as sum is a continuous function, we get that $\sum\limits_{i=1}^k\lambda_i$ satisfies large deviation principle in scale $n$ with good rate function given by
\begin{align*}
    J_{\text{sum}}(s)=\inf\l\{\sum\limits_{i=1}^kJ(x_i):x_1\geq x_2\geq \dots\geq x_k\geq 2,\sum x_i=s\r\},
\end{align*}
where $J(x)=J_V(x)-1$ with $V(x)=x^2/2$. For GUE/GOE, $J_V(x)=  2\int\limits_{2}^{x}\sqrt{(y/2)^2-1}dy+1$. As $J(x)$ is convex and applying Jensen's inequality gives us that $J_{\text{sum}}(s)=kJ(s/k).$ Also $J(2+x)/x^{3/2}\rightarrow 4/3.$ Taking $m\rightarrow\infty$ in \eqref{eq: Hermite LDP} we get that for any given $\vep>0$, there exists $n_{k,\vep}$ so that for any $n\geq n_{k,\vep},$ we have
\begin{align*}
\P\l(D_k({(0,1)\to (1, n)})
\geq 2k\sqrt{n}+tkn^{-1/6}\r)\leq \exp\l(-\frac{4k}{3}t^{3/2}(1-\vep)\r).
\end{align*}
This implies
\begin{align*}
    \P\l(\lambda_k(H_{n,2})\geq 2\sqrt{n}+tn^{-1/6}\r)\leq \P\l(\sum\limits_{i=1}^k\lambda_i(H_{n,2})\geq 2k\sqrt{n}+ktn^{-1/6}\r)\leq\exp\l(-\frac{4k}{3}t^{3/2}(1-\vep)\r).
\end{align*}

Using the superposition-decimation result of \eqref{eq: GOE-GUE relation}, we have that
\begin{align}\label{eq: GUE/GOE superposition}
\text{even}\l(\text{GOE}_{n}\cup \text{GOE}_{n+1}\r)=\text{GUE}_n.
\end{align} 

As a result, we have 
\begin{align*}
\P\l(\lambda_k(H_{n,1})\geq 2\sqrt{n}+tn^{-1/6}\r)^2\leq \P\l(\lambda_k(H_{n,2})\geq 2\sqrt{n}+tn^{-1/6}\r)\leq \exp\l(-\frac{4k}{3}t^{3/2}(1-\vep)\r).
\end{align*}
Using Proposition \ref{prop: sum of top k stoch dom}, for $\beta\geq 2$,
\begin{align*}
     \P\l(\lambda_k(H_{n,\beta})\geq 2\sqrt{n}+tn^{-1/6}\r)&\leq \P\l(\sum\limits_{i=1}^k\lambda_i(H_{n,\beta})\geq 2k\sqrt{n}+ktn^{-1/6}\r)\\
     &\leq \P\l(\sum\limits_{i=1}^k\lambda_i(H_{\lceil\beta n/2\rceil,2})\geq 2k \sqrt{n\beta/2}+ktn^{-1/6}\sqrt{\beta/2}\r)\\
     &\leq \exp\l(-\frac{2\beta k}{3}t^{3/2}(1-\vep)\r).
\end{align*}

\subsection{Laguerre ensemble.}
We use the same approach as we did in the Hermite ensemble. We first obtain sharp upper bound for the sum of top $k$ eigenvalues in $\beta=2$ case. To this end, we use the distributional identity between the eigenvalues of LUE ($\beta=2$ case of Laguerre ensemble) and the passage times of exponential last passage percolation.

Let $\mu_{m,n,1}\geq \mu_{m,n,2}\geq ...\geq \mu_{m,n,n}$ be the random quantities such that $T_{m,n,k}:=\sum\limits_{i=1}^k\mu_{m,n,i}$ is the $k$-disjoint path last passage time in exponential environment, for all $1\leq k\leq n$. By \cite{AVMW13}[Theorem 2], $\{\mu_{m,n,i}\}_{1\leq i\leq n}$ is distributed as $\{\lambda_i\}_{1\leq i\leq n}$ in \eqref{Laguerre density} with $\beta=2.$ Here $T_{m,n,k}$ is the $k$-disjoint path last passage time $T_k((1,1)\rightarrow(m,n))$. 
Note that $T_k\left((1,1) \to (m, n)\right)$ possess the super-additive structure. That is,
\begin{align*}
    T_k({(1,1)\to (m, n)})+T_k({(m+1,n-k+1)\to (2m, 2n-k)})\leq T_k({(1,1)\to (2m, 2n)}).
\end{align*}
In the above inequality, the two summands in the left hand side are i.i.d.

We denote $\gamma:=m/n$ and $a=\l(1+\sqrt{\gamma}\r)^2$ and $b=\gamma^{-1/6}\l(1+\sqrt{\gamma}\r)^{4/3}.$ Hence, for $p\in\N$, we have,
\begin{align*}
    \P\l(T_k({(1,1)\to (m, n)})\geq kan+tbkn^{1/3}\r)^p\leq \P\l(T_k({(1,1)\to (pm, pn)})\geq akpn+tbkpn^{1/3}\r).
\end{align*}
 This gives us,
 \begin{align}\label{eq: Laguerre LDP}
\frac{\log\l(\P\l(T_k({(1,1)\to (m, n)})\geq ank+tbkn^{1/3}\r)\r)}{n}&\leq \frac{\log\l(\P\l(\frac{T_k({(1,1)\to (pm, pn)})}{pn}\geq ak+tbkn^{-2/3}\r)\r)}{pn}\notag \\
&=\frac{\log\l(\P\l(\frac{\sum\limits_{i=1}^k\lambda_i}{{pn}}\geq ak+tbkn^{-2/3}\r)\r)}{pn},
 \end{align}

where $\lambda_i$ are the top $k$ eigenvalues of LUE matrix $XX^*$ where $X$ is $pn\times pm$ matrix with i.i.d. $\mathcal{C}\mathcal{N}(0,1)$ entries. That is, the top $k$ eigenvalues of \eqref{Laguerre density} with $\beta=2$ and $n=pn$ and $m=pm$.
 We apply the large deviation result of \cite{GH22A} for the top $k$ eigenvalues of LUE.
\cite[Theorem 2.4]{GH22A} gives that the joint law of top $k$ eigenvalues of LUE satisfies large deviation principle in the scale $m+n$ with good rate function given by 
\begin{align*}
    J^{(k)}(x_1,\dots,x_k)=\frac{\beta}{2(1+\gamma)}\sum\limits_{i=1}^k\int_{(1+\sqrt{\gamma})^2}^{x_i}\frac{\sqrt{(y-(1-\sqrt{\gamma})^2))(y-(1+\sqrt{\gamma})^2))}}{y}dy.
\end{align*}

We note here that there is a typo in \cite[Theorem 2.4]{GH22A} and the factor in the denominator should be $2(1+\gamma)$ instead of $4(1+\gamma)$. 
Applying the contraction principle (see \cite{DZ98}), as sum is a continuous function, we get that $\sum\limits_{i=1}^k\lambda_i$ satisfies large deviation principle in scale $m+n$ with good rate function given by
\begin{align*}
    J_{\text{sum}}(s)=\inf\l\{\sum\limits_{i=1}^kJ(x_i):x_1\geq x_2\geq \dots\geq x_k\geq (1+\sqrt{\gamma})^2,\sum x_i=s\r\},
\end{align*}
where $J(x)$ is same as $J^{(1)}(x)$. As $J(x)$ is convex and applying Jensen's inequality gives us that $J_{\text{sum}}(s)=kJ(s/k).$ Also $J((1+\sqrt{\gamma})^2+x)/x^{3/2}\rightarrow \frac{4\gamma^{1/4}}{3(1+\sqrt{\gamma})^2(1+\gamma)}.$ Taking $p\rightarrow\infty$, for fixed $m,n$, in \eqref{eq: Laguerre LDP} we get that for any given $\vep>0$, there exists $n_{k,\vep}$ so that for any $n\geq n_{k,\vep},$ we have
\begin{align*}
\P\l(T_k({(1,1)\to (cn, n)})
\geq akn+tkbn^{1/3}\r)\leq \exp\l(-\frac{4k}{3}t^{3/2}(1-\vep)\r).
\end{align*}

This implies
\begin{align*}
    \P\l(\lambda_k(L_{n,cn,2})\geq an+tbn^{1/3}\r)\leq \P\l(\sum\limits_{i=1}^k\lambda_i(L_{n,cn,2})\geq ank+ktbn^{1/3}\r)\leq\exp\l(-\frac{4k}{3}t^{3/2}(1-\vep)\r).
\end{align*}

Using the superposition-decimation result of \eqref{eq:LOE-LSE relation}, we have that
\begin{align*}
\P\l(\lambda_k(L_{n,cn,1})\geq an+tbn^{1/3}\r)^2\leq \P\l(\lambda_k(L_{n,cn,2})\geq an+tbn^{1/3}\r)\leq \exp\l(-\frac{4k}{3}t^{3/2}(1-\vep)\r).
\end{align*}

Using Proposition \ref{prop: sum of top k stoch dom}, for $\beta\geq 2$,
\begin{align*}
\P\l(\lambda_k(L_{n,cn,\beta})\geq an+tbn^{1/3}\r)&\leq \P\l(\sum\limits_{i=1}^k\lambda_i(L_{n,cn,\beta})\geq ank+ktbn^{1/3}\r)\\
&\leq \P\l(\sum\limits_{i=1}^k\lambda_i(L_{\lceil\beta n/2\rceil,\lceil cn\beta/2\rceil,2})\geq ank\beta/2+\beta ktbn^{1/3}/2\r)\\
&\leq \exp\l(-\frac{2\beta k}{3}t^{3/2}(1-\vep)\r). 
\end{align*}
\begin{remark}\label{rem: stronger upper tail}
    We now give proof outline of Theorem \ref{thm: stronger upper tail}. It is immediate from the use of superposition-decimation result of \eqref{eq:LOE-LSE relation} and the stochastic domination result of \cite{JB23} that, \cite{BBBK25}[Theorem 1.1(ii)] can be extended to the range $\beta\geq 1$ (see the proof of \cite{BBBK25}[Theorem 1.1(i)] for details). Note that this argument works for odd $n$. For even $n$, Cauchy interlacing argument completes the proof.  For Laguerre case, the second part of \cite{BBBK25}[Theorem 1.1] holds for $m=\lfloor cn\rfloor$ for any $c>0.$ This is stronger than the version in \cite{BBBK25}[Theorem 1.1(iii)]. 
\end{remark}

\section{Stochastic domination}\label{sec: stoch dom}

The proof of Proposition \ref{prop: sum of top k stoch dom} is based on the following deterministic lemma which we will prove later. Recall that a Jacobi matrix is a tridiagonal matrix with non-negative off-diagonal entries.
\begin{lemma}\label{lem: Jacobi non-neg function}
    The sum of top $k$ eigenvalues of a symmetric Jacobi matrix is a non-decreasing function of the off-diagonal entries.
\end{lemma}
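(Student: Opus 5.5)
We will prove Lemma \ref{lem: Jacobi non-neg function} using Ky Fan's maximum principle together with a sign-flip (diagonal unitary conjugation) argument. Let $J$ be an $n\times n$ symmetric Jacobi matrix with diagonal $a_1,\dots,a_n$ and off-diagonal entries $b_1,\dots,b_{n-1}\geq 0$. Write
\[
S_k(J)=\sum_{i=1}^k\lambda_i(J).
\]
Ky Fan's principle gives
\[
S_k(J)=\max\l\{\operatorname{tr}(P J): P \text{ an orthogonal projection of rank } k\r\}.
\]
In coordinates,
\[
\operatorname{tr}(PJ)=\sum_i a_i P_{ii}+2\sum_{j=1}^{n-1} b_j P_{j,j+1}.
\]
Hence $S_k$ is a convex function of $(b_1,\dots,b_{n-1})\in\R^{n-1}$, being a supremum of affine functions, where we now allow the $b_j$ to take arbitrary real values.

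Next we show that $S_k$ is invariant under sign changes of each individual $b_j$. Fix $j$ and let $D$ be the diagonal matrix with entries $+1$ in positions $1,\dots,j$ and $-1$ in positions $j+1,\dots,n$. Then $DJD$ agrees with $J$ except that $b_j$ is replaced by $-b_j$. Since $DJD$ is orthogonally similar to $J$, the spectrum is unchanged, and so $S_k(\dots,b_j,\dots)=S_k(\dots,-b_j,\dots)$. It follows that, as a function of the single variable $b_j$ with the other entries fixed, $g(x)=S_k(\dots,x,\dots)$ is both convex and even on $\R$.

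A convex even function is non-decreasing on $[0,\infty)$. To see this, take $0\leq x<y$ and write $x=\theta y+(1-\theta)(-y)$ with $\theta=\frac{x+y}{2y}\in[0,1]$. Convexity and evenness then give
\[
g(x)\leq \theta g(y)+(1-\theta)g(-y)=g(y).
\]
Applying this coordinatewise, increasing one off-diagonal entry at a time while staying in the non-negative orthant, shows that $S_k$ is non-decreasing in each off-diagonal entry. This is exactly the statement of the lemma.

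The only step requiring care is confirming that the Ky Fan representation gives convexity jointly in the off-diagonal entries. This holds because, for each fixed $P$, $J\mapsto\operatorname{tr}(PJ)$ is linear in $J$, and $J$ depends linearly on the $b_j$. No step here is a serious obstacle. As an alternative for the monotonicity, one could use Hellmann–Feynman: where the eigenvalues are simple,
\[
\partial_{b_j}S_k=2\sum_{i\leq k}u_i(j)u_i(j+1),
\]
and one would then need to argue this is non-negative. The convexity-plus-symmetry route avoids that computation, so I would use it.
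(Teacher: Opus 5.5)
Your proof is correct and follows essentially the same route as the paper. Both arguments use convexity of the sum of the top $k$ eigenvalues in each off-diagonal entry (the paper cites Horn--Johnson, Corollary 4.3.39, where you derive it from Ky Fan's maximum principle), evenness via conjugation by the diagonal $\pm 1$ matrix, and the fact that an even convex function is non-decreasing on $[0,\infty)$; you also spell out the elementary convex-even argument and the coordinatewise passage to joint monotonicity, which the paper leaves implicit.
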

\begin{proof}[Proof of Proposition \ref{prop: sum of top k stoch dom}]
The matrices $\sqrt{\beta_2}H_{n,\beta_2}$ and $\sqrt{\beta_1}H_{s,\beta_1}$ (given in \eqref{eq: tridiag matrix}) can be coupled so that the off-diagonal entries in the top-left $n \times n$ principal sub-matrix of the latter are strictly larger than the corresponding entries in
the former, and so that the diagonal entries are equal. Using Lemma \ref{lem: Jacobi non-neg function}, it follows that    
\begin{align*}
\sqrt{\beta_2}\sum\limits_{i=1}^k\lambda_i(H_{n,\beta_2})&\overset{\text{s.t.}}{\leq} \sqrt{\beta_1}\sum\limits_{i=1}^k\lambda_i(H_{s,\beta_1}).
\end{align*}
The Laguerre case follows by similar argument, as increasing parameters results in increasing off-diagonal and diagonal entries, which along with Lemma \ref{lem: Jacobi non-neg function} gives the stochastic comparison result.
\end{proof}
\begin{proof}[Proof of Lemma \ref{lem: Jacobi non-neg function}]
    Let $A$ be a $n\times n$ Jacobi matrix. It suffices to show that the sum of top $k$ eigenvalues is a non-decreasing function of the off-diagonal entry $A_{i,i+1}=a_i$. Note that $A$ is similar to the tridiagonal matrix $B$ where $B_{i,i+1}=B_{i+1,i}=-a_i$. Indeed, take the diagonal matrix $D$ where $D_{j,j}=1$ if $j\leq i$ else it is $-1$. Taking $D^{-1}AD$, we obtain $B$, where $B_{i,i+1}=B_{i+1,i}=-a_i.$ Hence the sum of top $k$ eigenvalues are preserved if signs are flipped up to symmetry. Also the sum of top $k$ eigenvalues is a convex function of the entry $a_{i,i+1}$ (see \cite[Corollary 4.3.39]{HJ}). Hence a symmetric convex function is non-decreasing on the positive reals. As a result we have that the sum of top $k$ eigenvalues of a symmetric Jacobi matrix is a non-decreasing function of the off-diagonal entries.
\end{proof}


\begin{proof}[Proof of Theorem \ref{thm: TW tails}]
    By \eqref{eq: TW right tail bounds}
    \begin{align*}
        \P(TW_{\beta}\geq t)\leq \exp\l(-\frac{2\beta t^{3/2}}{3}(1+o(1))\r).
    \end{align*}
    After normalizing and taking limit of \eqref{eq: gen hermite comp}, we get
    \begin{align}\label{eq: TW equality}
        TW_{2/k}^{(k)}\overset{d}{=}k^{2/3}TW_{2k}^{(1)}.
    \end{align}
    
    Using \eqref{eq: TW right tail bounds} we get,
    \begin{align*}
        \P\l(TW_{2/k}^{(k)}\geq t\r)&=\P\l(k^{2/3}TW_{2k}^{(1)}\geq t\r)\\
        &\leq \exp\l(-\frac{4 t^{3/2}}{3}(1+o(1))\r).
    \end{align*}
    Taking $\alpha=(\beta'/\beta)^{2/3}$ in Theorem \ref{thm:VP result}, we get that for any $k\geq 2$ and $\beta\geq 2/k$,
    \begin{align*}
        \P\l(TW_{\beta}^{(k)}\geq t\r)\leq \exp\l(-\frac{2\beta k t^{3/2}}{3}(1+o(1))\r).
    \end{align*}
    This gives (1) in Theorem \ref{thm: TW tails}. (2) follows by taking $t$ large and fixed and then taking $n\rightarrow\infty$ limit in Theorem \ref{t: rtlb} and using \eqref{eq: Hermite TW convergence}.

Equation $1.17$ and $1.22$ of \cite{TW94} give that for any $k\in\N$,
 \begin{align*}
        \P\l(TW_{2}^{(k)}\leq -t\r)\leq \exp\l(-\frac{ t^{3}}{12}(1+o(1))\r).
    \end{align*}
 Taking $\alpha=(\beta'/\beta)^{1/3}$ in Theorem \ref{thm:VP result}, we get that for any $k\geq 2$ and $\beta\leq 2$,
    \begin{align*}
        \P\l(TW_{\beta}^{(k)}\leq -t\r)\leq \exp\l(-\frac{\beta  t^{3}}{24}(1+o(1))\r).
    \end{align*}

    \eqref{eq: TW right tail bounds} also gives that 
    \begin{align*}
         TW_{2/k}^{(ik)}\overset{d}{=}k^{2/3}TW_{2k}^{(i)}.
    \end{align*}

     Taking $\alpha=(\beta'/\beta)^{1/3}$ in Theorem \ref{thm:VP result}, we get that for any $k\geq 1$ and $\beta>0$,
    \begin{align*}
        \P\l(TW_{\beta}^{(k)}\leq -t\r)\leq \exp\l(-\frac{\beta  t^{3}}{24}(1+o(1))\r).
    \end{align*}
\end{proof}

\begin{proof}[Proof of Theorem \ref{Stoch dom s=1/3}]
    By \ref{eq: gen hermite comp}, we have
    \begin{align*}
        \lambda_{k}(H_{n_r+r,2/(r+1)})\overset{s.t.}{\geq}\lambda_{k(r+1)}(H_{n_r+r,2/(r+1)})\overset{d}{=}\sqrt{r+1}\lambda_{k}(H_{n,2(r+1)})\\
        \lambda_{k}(L_{n_r+r,m_r+r,2/(r+1)})\overset{s.t.}{\geq}\lambda_{k(r+1)}(L_{n_r+r,m_r+r,2/(r+1)})\overset{d}{=}(r+1)\lambda_{k}(L_{n,m,2(r+1)})
    \end{align*}

    Normalizing the above quantities proves the finite versions of Theorem \ref{Stoch dom s=1/3}. The second part of follows immediately Theorem \ref{Stoch dom s=1/3} from taking the $n\rightarrow\infty$ limit and using \eqref{eq: Hermite TW convergence} and \eqref{eq: Laguerre TW convergence}.
\end{proof}

\section{Left tail bounds}\label{sec: left tail bounds}
We now prove Theorem \ref{t: ltub}. 
\begin{proof}[Proof of Theorem \ref{t: ltub}]
     By \cite[Theorem 1.3]{BBBK25}, for any $\beta>0$, we have
     \[
        \P \l(\lambda_{1}(H_{n,\beta}) \leq 2\sqrt{n} -tn^{-1/6}\r) \leq \exp \l({-\frac{\beta}{24}t^{3}\l(1-\varepsilon\r)}\r).
        \]
    By \eqref{eq:GOE-GSE relation}, we also have that
    \begin{align*}
        \sqrt{2}\lambda_{1}(H_{n,4})\overset{d}{=}\lambda_{2}(H_{2n+1,1}).
    \end{align*}
    As a result 
    \begin{align*}
        \P \l(\lambda_{2}(H_{2n+1,1}) \leq 2\sqrt{2n+1} -t(2n+1)^{-1/6}\r) &\leq  \P \l(\lambda_{1}(\sqrt{2}H_{n,4}) \leq 2\sqrt{2n+1} -t(2n+1)^{-1/6}\r)\\
        &\leq \exp \l({-\frac{1}{24}t^{3}\l(1-\varepsilon\r)}\r).
    \end{align*}
   Note that this argument works for odd $n$. For even $n$, Cauchy interlacing argument completes the proof. The proof for the Laguerre ensemble is similar.
\end{proof}

\bibliography{intersection,main,main2}
 \bibliographystyle{plain}

\end{document}